\newtheorem{theorem}{Theorem}
\newtheorem{lemma}[theorem]{Lemma}
\newtheorem{proposition}[theorem]{Proposition}
\newtheorem{lettertheorem}{Theorem}
\newtheorem{letterlemma}[lettertheorem]{Lemma}
\theoremstyle{definition}
\theoremstyle{remark}
\numberwithin{equation}{section}
\newcommand{\set}[1]{\left\{#1\right\}}
\newcommand{\abs}[1]{\lvert#1\rvert}
\newcommand{\nm}[1]{\lVert#1\rVert}
\newcommand{\esc}[1]{\left < #1 \right >}
\newcommand{\B}{\mathcal{B}}
\newcommand{\D}{\mathbb{D}}
\newcommand{\DD}{\widehat{\mathcal{D}}}
\newcommand{\Dd}{\widecheck{\mathcal{D}}}
\newcommand{\M}{\mathcal{M}}
\newcommand{\DDD}{\mathcal{D}}
\newcommand{\N}{\mathbb{N}}
\newcommand{\Z}{\mathbb{Z}}
\newcommand{\R}{\mathbb{R}}
\newcommand{\C}{\mathbb{C}}
\newcommand{\ep}{\varepsilon}
\renewcommand{\phi}{\varphi}
\newcommand{\T}{\mathbb{T}}
\def\a{\alpha}       \def\b{\beta}        \def\g{\gamma}
\def\la{\lambda}     \def\om{\omega}      
       \def\t{\theta}       \def\f{\phi}
         \def\r{\rho}         \def\z{\zeta}
\def\F{\Phi}         \def\c{\chi}         
\def\G{\Gamma}
\def\omg{\widehat{\omega}}
\def\mug{\widehat{\mu}}
\renewcommand{\H}{\mathcal{H}}
\newenvironment{Prf}{\noindent{\emph{Proof of}}}
{\hfill$\Box$ }
\begin{document}

\title[Fractional derivative description of the Bloch space]{Fractional derivative description of the Bloch space}

\keywords{Fractional derivative, Bloch space, radial weight, doubling weight}

\author{\'Alvaro Moreno}
\address{Departamento de An\'alisis Matem\'atico, Universidad de M\'alaga, Campus de
Teatinos, 29071 M\'alaga, Spain} 
\email{alvarommorenolopez@uma.es}

\author{Jos\'e \'Angel Pel\'aez}
\address{Departamento de An\'alisis Matem\'atico, Universidad de M\'alaga, Campus de
Teatinos, 29071 M\'alaga, Spain} \email{japelaez@uma.es}

\author{Elena de la Rosa}
\address{Departamento de An\'alisis Matem\'atico, Universidad de M\'alaga, Campus de
Teatinos, 29071 M\'alaga, Spain} 
\email{elena.rosa@uma.es}

\thanks{This research was supported in part by Ministerio de Ciencia e Innovaci\'on, Spain, project PID2022-136619NB-I00; La Junta de Andaluc{\'i}a,
project FQM210.}

\subjclass[26A33, 30H30]{26A33,30H30}

\begin{abstract}
We establish new characterizations of the Bloch space $\mathcal{B}$ which include
  descriptions  in terms of classical fractional derivatives.
 Being precise,
for an analytic function   $f(z)=\sum_{n=0}^\infty \widehat{f}(n) z^n$  in the unit disc $\mathbb{D}$,
 we define the fractional derivative 
 $
  D^{\mu}(f)(z)=\sum\limits_{n=0}^{\infty} \frac{\widehat{f}(n)}{\mu_{2n+1}} z^n
 $
induced by a radial weight $\mu$,
  where $\mu_{2n+1}=\int_0^1 r^{2n+1}\mu(r)\,dr$ are the odd moments of $\mu$.  
Then, we consider
 the space $
    \mathcal{B}^\mu$ of analytic functions $f$ in $\mathbb{D}$ such that $\nm{f}_{\mathcal{B}^\mu}=\sup_{z\in \D} \widehat{\mu}(z)\abs{D^\mu(f)(z)}<\infty$, 
where $\widehat{\mu}(z)=\int_{|z|}^1 \mu(s)\,ds$.

We prove that $\mathcal{B}^\mu$ is continously embedded in  $\B$ for any radial weight $\mu$, and  $\mathcal{B}=\mathcal{B}^\mu$ if and only if $\mu\in \mathcal{D}=\widehat{\mathcal{D}}\cap\widecheck{\mathcal{D}}$. 
A radial weight $\mu \in \widehat{\mathcal{D}}$ if $\sup_{0\le r <1}\frac{\widehat{\mu}(r)}{\widehat{\mu}\left(\frac{1+r}{2}\right)}<\infty$
and a radial weight $\mu \in \widecheck{\mathcal{D}}$ if there exist $K=K(\mu)>1$ such that $\inf_{0\le r<1}\frac{\widehat{\mu}(r)}{ \widehat{\mu}\left(1-\frac{1-r}{K}\right)}>1.$

\end{abstract}

\maketitle

\maketitle

\section{Introduction}
Let $\H(\D)$ denote the space of analytic functions in the unit disc $\D=\{z\in\C:|z|<1\}$. 
For a nonnegative function $\om\in L^1([0,1))$, the extension to $\D$, defined by 
$\om(z)=\om(|z|)$ for all $z\in\D$, is called a radial weight.
 For $0<p<\infty$ and such an $\omega$, the Lebesgue space $L^p_\om$ consists of complex-valued measurable functions $f$ on $\D$ such that
    $$
    \|f\|_{L^p_\omega}^p=\int_\D|f(z)|^p\omega(z)\,dA(z)<\infty,
    $$
where $dA(z)=\frac{dx\,dy}{\pi}$ is the normalized Lebesgue area measure on $\D$. The corresponding weighted Bergman space is $A^p_\om=L^p_\omega\cap\H(\D)$. Throughout this paper we assume $\widehat{\om}(z)=\int_{|z|}^1\om(s)\,ds>0$ for all $z\in\D$, for otherwise $A^p_\om=\H(\D)$. As usual, for the standard weights 
$\omega(z)=(\alpha+1)(1-|z|^2)^\alpha$, $\alpha>-1$, we simply write $L^p_\alpha$ and $A^p_\alpha$ for the corresponding Lebesgue and Bergman spaces. In addition, we denote $L^p=L^p_0$ and $A^p=A^p_0$.

For  a radial weight $\mu$,
the  fractional derivative of $f(z)=\sum\limits_{n=0}^{\infty} \widehat{f}(n) z^n\in \H(\D)$  (induced by $\mu$) is
\begin{equation}
    \label{Dmu}
    D^{\mu}(f)(z)=\sum\limits_{n=0}^{\infty} \frac{\widehat{f}(n)}{\mu_{2n+1}} z^n, \; z \in \D.
\end{equation}
 Here and from now on, 
$\mu_{2n+1}$ are the odd moments of $\mu$, and in general we write $\mu_x=\int_0^1r^x\mu(r)\,dr$
 for a radial weight $\mu$ and  $x\ge0$. It is clear that $D^\mu(f)$ is a polynomial if $f$ is a polynomial and it follows from the inequality 
$$\mu_{2n+1}
 \geq 
\ep^{n+\frac{1}{2}}\mug(\sqrt{\ep}), \quad 0<\ep<1,\quad n\in\N,$$ that $D^\mu(f)\in \H(\D)$
for each $f\in\H(\D)$.   If $\mu$ is the standard weight $\mu(z)=\beta(1-|z|^2)^{\beta-1}$, $\b >0$, $D^\mu(f)$ is nothing but 
\begin{equation}\label{eq:deffracder2}
D^{\beta} (f)(z)=\frac{2}{\G(\beta+1)}\sum\limits_{n=1}^{\infty} \frac{\G (n+\beta+1)}{\G(n+1)} \widehat{f}(n) z^n,\quad z\in\D,
\end{equation}
which basically coincides with the fractional derivative of order $\b>0$  introduced by Hardy and Littlewood in \cite[p. 409]{HLMathZ}.
The differences between \eqref{eq:deffracder2} and \cite[(3.13)]{HLMathZ}  are in the multiplicative factor $\frac{2}{\G(\beta+1)}$ and the inessential factor $z^\beta$. 
See  \cite{PeralaAASF20,ZhuPacific94} for related definitions or reformulations of classical and generalized fractional derivative.

The fractional derivative $D^\mu$ was introduced in \cite{PeldelaRosa22} by the second and third authors to study Littlewood-Paley formulas for Bergman spaces, and in particular they proved  that 
\begin{equation}\label{eq:intro1}
    \|f\|_{A^p}^p\asymp\int_\D|D^{\mu}(f)(z)|^p\widehat{\mu}(z)^p\,dA(z),\quad f\in\H(\D), \quad \mu\in\DDD.
    \end{equation}
Let us recall that a radial weight $\mu \in \DD$ if there exists $C=C(\mu)>0$ such that $ \widehat{\mu}(r)\leq C \widehat{\mu}(\frac{1+r}{2}), \; 0\leq r <1$, and a radial weight $\mu \in \Dd$ if there exist $K=K(\mu)>1$ and $C=C(\mu )>1$ such that $\widehat{\mu}(r)\geq C \widehat{\mu}(1-\frac{1-r}{K})$, $0\leq r <1.$ 
We denote $\mathcal{D}=\DD\cap\Dd$.

In this paper, we are interested in obtaining a $p=\infty$ version  of \eqref{eq:intro1}, therefore for each radial weight $\mu$ we  
    consider the normed space 
    $$
    \B^\mu = \set{f\in \H(\D) : \nm{f}_{\B^\mu}=\sup_{z\in \D} \mug(z)\abs{D^\mu(f)(z)}<\infty}.
    $$
A first natural approach to this problem lead us to the following question: Which are the radial weights $\mu$ such that
  $ \B^\mu=H^\infty$? As usual $H^\infty$ denotes the space of bounded analytic functions in $\D$.  As it could be expected, our first result provides a stark negative  answer to  this question.

\begin{proposition}\label{th:hinftynobmu}
Let $\mu$ be a radial weight. Then, $H^\infty\neq \B^\mu$.
\end{proposition}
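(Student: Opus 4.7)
The plan is to exhibit, for every radial weight $\mu$, a function $f\in\mathcal{B}^\mu\setminus H^\infty$. The key observation is that the operator $T^\mu(g)(z):=\int_0^1 t\mu(t)g(t^2z)\,dt = \sum_{n\geq 0}\mu_{2n+1}\widehat{g}(n)z^n$ is a right inverse to $D^\mu$, so $\mathcal{B}^\mu = T^\mu(H^\infty_{\widehat{\mu}})$ where $H^\infty_{\widehat{\mu}} := \{g\in\H(\D) : \sup_z \widehat{\mu}(z)|g(z)| < \infty\}$. Thus producing $f\in\mathcal{B}^\mu\setminus H^\infty$ is equivalent to producing $g\in H^\infty_{\widehat{\mu}}$ whose image $T^\mu g$ fails to be bounded.

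I would take $g(z):=\sum_{n\geq 1} a_n z^n$ with the telescoping choice $a_n := 1/\widehat{\mu}(1-1/n) - 1/\widehat{\mu}(1-1/(n-1))$ (and $a_1:= 1/\widehat{\mu}(0)$), so that $a_n\geq 0$ and $\sum_{n\leq N} a_n = 1/\widehat{\mu}(1-1/N)$. To verify $g\in H^\infty_{\widehat{\mu}}$ I would split $\sum a_n r^n$ at $N=\lfloor 1/(1-r)\rfloor$: the head is controlled by the telescoping identity, giving $\widehat{\mu}(r)\sum_{n\leq N} a_n r^n \leq 1$, while an Abel-summation bound on the tail $\sum_{n>N} a_n r^n$ is controlled once multiplied by $\widehat{\mu}(r)$ because $r^n$ provides geometric decay for $n \gg 1/(1-r)$. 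For the unboundedness of $T^\mu g$, I would use the inequality $\mu_{2n+1}\geq \epsilon^{n+1/2}\widehat{\mu}(\sqrt{\epsilon})$ stated in the excerpt with $\epsilon=(1-1/n)^2$, yielding $\mu_{2n+1}\gtrsim\widehat{\mu}(1-1/n)/e^2$. Then
$$\sum_n \mu_{2n+1} a_n \gtrsim \sum_n \widehat{\mu}(1-1/n) a_n = \sum_n\Bigl(1-\frac{\widehat{\mu}(1-1/n)}{\widehat{\mu}(1-1/(n-1))}\Bigr),$$
which diverges because the corresponding telescoping product $\prod_n\widehat{\mu}(1-1/n)/\widehat{\mu}(1-1/(n-1)) = \lim_N \widehat{\mu}(1-1/N)/\widehat{\mu}(0) = 0$. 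Since $T^\mu g$ has nonnegative Taylor coefficients, this forces $(T^\mu g)(r)\to\infty$ as $r\to 1^-$, so $f:= T^\mu g$ lies in $\mathcal{B}^\mu\setminus H^\infty$.

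The main obstacle is establishing the tail bound $\widehat{\mu}(r)\sum_{n>N}a_n r^n \leq C$ uniformly in $\mu$, without any regularity hypothesis such as $\widehat{\mathcal{D}}$. The inequality $\mu_{2n+1}\geq \epsilon^{n+1/2}\widehat{\mu}(\sqrt{\epsilon})$ is crucial, since it transforms the (automatic) divergence of $\int_0^1\mu/\widehat{\mu}\,dt = \log(\widehat{\mu}(0)/\widehat{\mu}(1^-)) = \infty$ into a discrete statement about the moments $\mu_{2n+1}$ and the jumps of $1/\widehat{\mu}$. Should the tail estimate nevertheless fail for a pathological weight, the fall-back argument is the monomial test: $\|z^n\|_{\mathcal{B}^\mu} = \Phi_\mu(n)/\mu_{2n+1}$ where $\Phi_\mu(n):=\sup_{0\leq r<1}\widehat{\mu}(r)r^n$, and whenever this ratio is unbounded the closed graph theorem, applied to the identity $H^\infty\to\mathcal{B}^\mu$ (whose graph is closed because each norm controls uniform convergence on compacta), precludes $H^\infty = \mathcal{B}^\mu$.
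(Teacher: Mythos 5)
Your overall strategy is sound and, once repaired, is essentially the paper's: reduce to a weight with $\widehat{\mathcal{D}}$-type control via the monomial test, then exhibit a function with nonnegative Taylor coefficients whose image under $D^\mu$ is comparable to $1/\widehat{\mu}$ (you use a telescoping series where the paper uses a lacunary one built from the levels $\widehat{\mu}(r_n)=2^{-n}$). The head estimate, the lower bound $\mu_{2n+1}\gtrsim\widehat{\mu}(1-1/n)$, and the divergence of $\sum_n\bigl(1-\widehat{\mu}(1-1/n)/\widehat{\mu}(1-1/(n-1))\bigr)$ are all correct. But as written the argument has a genuine hole, and it is exactly at the point you flag as "the main obstacle." For a general radial weight your function $g$ need not even be analytic in $\D$: take $\widehat{\mu}(r)=\exp\bigl(-e^{1/(1-r)}\bigr)$, a legitimate (smooth, positive) radial weight, for which $a_n\approx\exp(e^n)$ and the series $\sum a_nz^n$ has radius of convergence $0$. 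So the "main construction" cannot stand on its own, and your fall-back does not close the gap: it only disposes of the case where $\sup_n\Phi_\mu(n)/\mu_{2n+1}=\infty$, and you never show that the two cases (tail estimate holds / monomial ratio unbounded) exhaust all radial weights. A weight could a priori fall through both branches.

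The missing link is precisely the paper's first step: if the monomial ratio is bounded, i.e.\ $\widehat{\mu}(r)r^n\le C\mu_{2n+1}$ for all $n$ and $r$, then $\mu\in\widehat{\mathcal{D}}$ (this requires the moment manipulation carried out in the proof of (ii)$\Rightarrow$(i) of Theorem~3, ending with Lemma~A(iv); it is not immediate). Once $\mu\in\widehat{\mathcal{D}}$, Lemma~A(ii) gives $1/\widehat{\mu}(1-1/n)\lesssim n^\alpha$, so $g$ is analytic, and the tail bound does go through: Abel summation gives $\sum_{n>N}a_nr^n\le(1-r)\sum_{n>N}A_nr^n$ with $A_n=1/\widehat{\mu}(1-1/n)$, and $\widehat{\mu}(r)A_n\le C(n(1-r))^\alpha$ for $n>N\approx 1/(1-r)$ yields $\widehat{\mu}(r)\sum_{n>N}a_nr^n\le C(1-r)^{1+\alpha}\sum_n n^\alpha r^n\lesssim1$. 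With these two implications supplied, your proof is complete and is a legitimate (mildly different) variant of the paper's; without them, the dichotomy is not established and the proof does not cover all radial weights.
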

As for the proof  Proposition~\ref{th:hinftynobmu}  we show that the embedding
$H^\infty \subset \B^\mu$ implies $\mu\in\DD$, and then we draw on this property of the weight to construct a function $f\in  \B^\mu\setminus H^\infty$.

In view of Proposition~\ref{th:hinftynobmu}, the Littlewood-Paley formula
$$ \|f\|_{A^p}^p\asymp\int_\D |f^{(n)}(z)|^p (1-|z|)^{np}\,dA(z), \quad n\in\N, $$ and \eqref{eq:intro1}, it is also natural to study the relationship between $\B^\mu$ and the classical Bloch space of 
    $f\in \H(\D)$ such that  $\nm{f}_{\B}= |f(0)|+\sup_{z\in \D}(1-|z|)|f'(z)| <\infty$.
We will also give thought to the analogous question for the little versions of both spaces;
$\B_0$  the space of $f\in \H(\D)$ such that $\lim\limits_{\abs{z}\to 1^-}(1-\abs{z}^2)\abs{f'(z)} = 0$ and
$
\B_0^\mu = \set{f\in \H(\D) : \lim\limits_{\abs{z}\to 1^-}\mug(z)\abs{D^{\mu}(f)(z)} = 0}.
$

Our first main result is the following.
\begin{theorem}\label{th: contencion mu radial}
	Let $\mu$ be a radial weight. Then, $\B^\mu$ is continously embedded in $\B$, that is
\begin{equation}\label{eq: contencion mu radial}
  \|f \|_{\B} \lesssim \nm{f}_{\B^\mu}, \quad f\in \H(\D).
\end{equation}
Moreover,  $\B_0^\mu$ is continously embedded in $\B_0$.
\end{theorem}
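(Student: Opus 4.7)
The plan is to realize $f$ as a weighted integral of $g:=D^\mu(f)$ and deduce the Bloch-type growth of $f'$ from the defining bound $|g(w)|\le \|f\|_{\B^\mu}/\mug(w)$. Inserting $\widehat{f}(n)=\mu_{2n+1}\widehat{g}(n)$ together with $\mu_{2n+1}=\int_0^1 r^{2n+1}\mu(r)\,dr$ into the Taylor series produces
\[
f(z)=\int_0^1 r\,\mu(r)\,g(r^2z)\,dr, \qquad f'(z)=\int_0^1 r^3\,\mu(r)\,g'(r^2z)\,dr.
\]
The base point is then immediate: $|f(0)|=\mu_1|g(0)|\le (\mu_1/\mug(0))\|f\|_{\B^\mu}\le \|f\|_{\B^\mu}$, since $\mu_1\le\mug(0)$.

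To control $(1-|z|)|f'(z)|$, I would bound $|g'(r^2z)|$ via Cauchy's formula on a disc centered at $r^2z$, say of radius $(1-r^2|z|)/2$, giving
\[
|g'(r^2z)|\le \frac{2\|f\|_{\B^\mu}}{(1-r^2|z|)\,\mug\bigl((1+r^2|z|)/2\bigr)},
\]
and then estimate the resulting $r$-integral. The main obstacle is that for a generic radial $\mu$ not necessarily in $\DD$, the factor $\mug((1+r^2|z|)/2)$ in the denominator may decay arbitrarily fast near $r=1$, so the direct Cauchy estimate alone is not tight enough to yield the desired rate $(1-|z|)^{-1}$.

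To circumvent this I would use the companion representation
\[
f(z)=\int_0^1 \mug(r)\bigl(g(r^2z)+2r^2z\,g'(r^2z)\bigr)\,dr,
\]
coming from the identity $\mu_{2n+1}=(2n+1)\int_0^1 r^{2n}\mug(r)\,dr$, and then integrate by parts in $r$ using $\mu=-\mug'$. Cross-comparing the two expressions for $f'(z)$ should cancel the higher-derivative terms of $g$ and leave a representation in which the pointwise bound on $g$ is applied at radii $r^2|z|$, with $\mug$ appearing both as the integrating weight and in the estimate of $g$; this is the cancellation that should deliver the universal estimate $(1-|z|)|f'(z)|\le C\|f\|_{\B^\mu}$ for every radial weight $\mu$. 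The little-oh assertion $\B_0^\mu\hookrightarrow \B_0$ would then follow by the same argument: splitting the $r$-integral into $[0,r_0]$ and $[r_0,1]$ and letting first $|z|\to 1^-$ and then $r_0\to 1^-$ transfers the decay $\mug(|z|)|g(z)|\to 0$ into the decay $(1-|z|)|f'(z)|\to 0$.
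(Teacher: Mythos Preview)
Your approach has a genuine gap at the crucial step. You correctly identify that the direct Cauchy estimate on $g'(r^2z)$ is not sharp enough for an arbitrary radial weight: if you take the circle of radius $(1-r^2|z|)/2$, the factor $\mug((1+r^2|z|)/2)$ can be arbitrarily small compared with $\mug(r^2|z|)$ when $\mu\notin\DD$, and other natural radii lead to integrals like $\int_0^1 r^2\mu(r)\,[(1-r)\mug(r)]^{-1}\,dr$, which already diverge for $\mu\equiv 1$.

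The proposed fix, however, does not work. Your ``companion'' representation
\[
f(z)=\int_0^1 \mug(r)\bigl(g(r^2z)+2r^2z\,g'(r^2z)\bigr)\,dr
\]
is nothing but the original formula $f(z)=\int_0^1 r\mu(r)\,g(r^2z)\,dr$ after a single integration by parts using $\mu=-\mug'$. Consequently, differentiating both identities in $z$ gives \emph{the same} expression for $f'(z)$: your formula $f'(z)=\int_0^1 r^3\mu(r)\,g'(r^2z)\,dr$ and the differentiated companion $f'(z)=\int_0^1 \mug(r)\bigl(3r^2 g'(r^2z)+2r^4z\,g''(r^2z)\bigr)\,dr$ are related by that very integration by parts, so subtracting them yields $0=0$ and no new information. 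There is no linear combination of the two that produces a representation of $f'$ involving only values of $g$; both formulas live entirely at the level of $g'$ (and $g''$). Any further integration by parts either requires differentiating $\mu$ (not permitted for a general radial weight) or pushes the derivative back onto $g$, raising its order. So the promised cancellation is illusory, and you are left with exactly the obstruction you diagnosed.

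The paper avoids this entirely by a duality argument: it uses the identification $\B\simeq (A^1)^\star$ via the $A^2$-pairing and shows directly that $\langle\,\cdot\,,f\rangle_{A^2}$ is bounded on $A^1$ with norm $\lesssim \|f\|_{\B^\mu}$. The point is that the pairing can be rewritten as $\langle \tilde g, V_\mu(f_r)\rangle_{L^2}$, where $V_\mu(f)(z)=\mug(z)|z|^{-1}D^\mu(f)(z)$ is bounded by hypothesis and $\tilde g$ is an explicit multiplier transform of $g$ with $\|\tilde g\|_{A^1}\lesssim\|g\|_{A^1}$. This never estimates a derivative of $D^\mu(f)$ and hence never needs a doubling hypothesis on $\mu$. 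The little-oh part then follows from this inequality together with the characterization $f\in\B_0^\mu \Leftrightarrow \|f-f_r\|_{\B^\mu}\to 0$.
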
 
In order to prove
 the first part of Theorem~\ref{th: contencion mu radial}
we use an appropriate   integral representation of the fractional derivative $D^\mu$
and
the well-known identification $\B\simeq (A^1)^\star$ via the $A^2$-pairing \cite[Theorem~5.3]{Zhu}. Consequently, the proof
 boils down to showing that 
each $f\in \B^\mu$ induced a bounded linear functional 
$L_f(g)=\langle g,f\rangle_{A^2}=\lim_{r\to 1^-}\int_{\D} f(rz)\overline{g(z)}\, dA(z)$  on $A^1$ and $\| L_f\|\lesssim   \nm{f}_{\B^\mu}$. 
The proof of the second part of Theorem~\ref{th: contencion mu radial} is based on the fact that $f \in \B_0^\mu$ if and only if
$\lim_{r\to 1^-}\| f-f_r\|_{\B^\mu}=0$. Here and on the following $f_r(z)=f(rz)$, $0\le r<1$.

On the other hand, it is 
 obvious  that $\B= \B^\mu$ if $\mu=1$ and it is known that $f\in \B$ if and only if
$f^{(n)}(1-|z|)^n\in L^\infty$, $n\in\N$, \cite[Theorem~4]{Zhu}.  In addition, 
we recall that the Bloch space can be characterized in
 terms the multiplier transformation
$f^{[\beta]}(z)=\sum_{n=0}^\infty (n+1)^\beta\widehat{f}(n)z^n$, 
which may also be regarded as fractional derivative
of order $\beta>0$ \cite{ChoeRinHung}.

Our next main result shows that characterizations of the Bloch space in terms of classical fractional derivatives are  examples of  a general phenomenon rather than  particular cases, and moreover it provides a neat characterization of the radial weights $\mu$ such that $\B=\B^\mu$.

\begin{theorem}\label{th: reverse}
	Let $\mu$ be a radial weight. Then, the following conditions are equivalent:
\begin{itemize}
\item[\rm (i)] $\mu\in\DDD$;
\item[\rm(ii)]   $\B$ is continously embedded in $\B^\mu$, that is
\begin{equation}\label{Desig Bmu radial} 
  \|f \|_{\B^\mu} \lesssim \nm{f}_{\B}, \quad f\in \H(\D);
\end{equation}
\item[\rm(iii)]  $\B^\mu=\B$ and  $$  \|f \|_{\B^\mu} \asymp \nm{f}_{\B^\mu}, \quad f\in \H(\D);$$
\item[\rm(iv)]   $\B_0$ is continously embedded in $\B_0^\mu$, that is
\begin{equation}\label{Desig Bmu radialBcero} 
  \|f \|_{\B^\mu} \lesssim \nm{f}_{\B}, \quad f \in\B_0;
\end{equation}
\item[\rm(v)]  $\B_0^\mu=\B_0$ with equivalence of norms.
\end{itemize}
\end{theorem}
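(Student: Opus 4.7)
The plan is first to collapse the chain of equivalences to the single implication (i)$\Leftrightarrow$(ii) by soft arguments. Theorem~\ref{th: contencion mu radial} already supplies the reverse embeddings, so (ii)$\Leftrightarrow$(iii) and (iv)$\Leftrightarrow$(v) are immediate. For (ii)$\Rightarrow$(iv), if $f \in \mathcal{B}_0$ then the dilates $f_r(z)=f(rz)$ converge to $f$ in $\mathcal{B}$, and hence in $\mathcal{B}^\mu$ by (ii); each $f_r$ extends holomorphically across $\partial\mathbb{D}$, so $D^\mu f_r$ is bounded near the boundary and $\widehat{\mu}(z)|D^\mu f_r(z)|\to 0$ as $|z|\to 1^-$, which places $f_r$ in $\mathcal{B}_0^\mu$ and thus $f$ in $\mathcal{B}_0^\mu$. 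Conversely, for (iv)$\Rightarrow$(ii), any $f \in \mathcal{B}$ has dilates $f_r \in \mathcal{B}_0$ with $\|f_r\|_{\mathcal{B}}\le \|f\|_{\mathcal{B}}$; using the identity $D^\mu f_r(z) = (D^\mu f)(rz)$ and passing to the limit $r \to 1^-$ in the pointwise estimate $\widehat{\mu}(z)|D^\mu f_r(z)|\lesssim \|f_r\|_{\mathcal{B}}$ supplied by (iv), one recovers $\|f\|_{\mathcal{B}^\mu}\lesssim \|f\|_{\mathcal{B}}$.

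For the main implication (i)$\Rightarrow$(ii) I would use a dyadic Littlewood--Paley decomposition. Writing $\Delta_k f(z)=\sum_{2^k\le n<2^{k+1}}\widehat{f}(n)z^n$, two standard facts for $f\in\mathcal{B}$ are invoked: $\|\Delta_k f\|_{H^\infty}\lesssim \|f\|_{\mathcal{B}}$, and the sharper bound $|\Delta_k f(z)|\lesssim |z|^{2^k}\|f\|_{\mathcal{B}}$ obtained by factoring out $z^{2^k}$. For $\mu\in\widehat{\mathcal{D}}$ the odd moments satisfy $\mu_{2n+1}\asymp \widehat{\mu}(1-2^{-k})$ uniformly for $n$ in the $k$-th dyadic block, so
\[
|D^\mu f(z)|\lesssim \sum_{k\ge 0}\frac{|\Delta_k f(z)|}{\widehat{\mu}(1-2^{-k})}.
\]
Fix $|z|\asymp 1-2^{-j}$. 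The head $k\le j$ is controlled by $\widecheck{\mathcal{D}}$, which supplies the geometric ratio $\widehat{\mu}(1-2^{-j})/\widehat{\mu}(1-2^{-k})\lesssim C^{-(j-k)}$; the tail $k>j$ is controlled by $|z|^{2^k}\lesssim e^{-2^{k-j}}$ combined with $\widehat{\mathcal{D}}$, which prevents $1/\widehat{\mu}(1-2^{-k})$ from growing faster than geometrically. Multiplying by $\widehat{\mu}(z)\asymp\widehat{\mu}(1-2^{-j})$ and summing both convergent geometric pieces gives the required bound $\widehat{\mu}(z)|D^\mu f(z)|\lesssim \|f\|_{\mathcal{B}}$.

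For the converse (ii)$\Rightarrow$(i) I would recover the two halves of $\mathcal{D}$ separately. Since $H^\infty\subset \mathcal{B}$, assumption (ii) implies $H^\infty\subset \mathcal{B}^\mu$, and then the first step of the proof of Proposition~\ref{th:hinftynobmu} already forces $\mu\in\widehat{\mathcal{D}}$. To extract $\mu\in\widecheck{\mathcal{D}}$ I would test (ii) on the Bloch function $f(z)=\log\frac{1}{1-z}$, whose fractional derivative $D^\mu f(z)=\sum_{n\ge 1}z^n/(n\mu_{2n+1})$ has positive Taylor coefficients. Evaluating at $z=1-1/N$ and using $\mu_{2n+1}\asymp\widehat{\mu}(1-1/n)$ (now available from $\mu\in\widehat{\mathcal{D}}$), (ii) reduces to the discrete inequality
\[
\widehat{\mu}\!\left(1-\tfrac{1}{N}\right)\sum_{n=1}^{N}\frac{1}{n\,\widehat{\mu}(1-1/n)}\lesssim 1,\qquad N\ge 1.
\]
The main obstacle I expect is the final step: showing rigorously that this discrete estimate is equivalent to the continuous $\widecheck{\mathcal{D}}$-condition. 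A dyadic argument should work—if $\widehat{\mu}$ failed to decay geometrically along $1-2^{-k}$, the left-hand side would grow at least like $\log N$—but pinning down the right constants and handling borderline weights is the technically delicate part.
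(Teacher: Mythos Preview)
Your reductions among (ii)--(v) match the paper's, so I focus on (i)$\Leftrightarrow$(ii), where your approach is genuinely different from the authors'.

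\textbf{(i)$\Rightarrow$(ii).} The paper does \emph{not} use a dyadic block decomposition. It writes $D^\mu f$ via the integral representation of Proposition~\ref{repr.}, replaces $f$ by $P(h)$ with $h\in L^\infty$, $\|h\|_{L^\infty}\asymp\|f\|_\B$ (surjectivity of the unweighted Bergman projection onto $\B$), and then bounds $|D^\mu f(z)|$ by $\|h\|_{L^\infty}\int_\D|D^\mu(B_z)(u)|\,dA(u)$; the integral is controlled by Proposition~\ref{medias orden 1} and Lemma~\ref{caract. D check}(iii). Your block argument is more elementary and avoids the kernel estimates. One imprecision to fix: you write $|D^\mu f(z)|\lesssim\sum_k|\Delta_kf(z)|/\widehat\mu(1-2^{-k})$, but $D^\mu$ is not constant on a sharp block, so this inequality is not literal. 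What your argument really uses is the coefficient bound $|\widehat f(n)|\lesssim\|f\|_\B/n$, which gives $\sum_{n\in[2^k,2^{k+1})}|\widehat f(n)|\,|z|^n\lesssim\min(1,|z|^{2^k})\|f\|_\B$; combined with $\mu_{2n+1}\asymp\widehat\mu(1-2^{-k})$ on the block (Lemma~\ref{caract. pesos doblantes}(iii)), this suffices. Either rewrite in terms of absolute-coefficient majorants, or switch to the smooth blocks $V_n$ of \eqref{vn} for which the multiplier comparison is cleaner.

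\textbf{(ii)$\Rightarrow$(i).} For $\mu\in\widehat{\mathcal D}$ you invoke ``the first step of the proof of Proposition~\ref{th:hinftynobmu}'', but in the paper that proof \emph{cites} the present theorem, so this is circular as written; just test \eqref{Desig Bmu radial} on monomials directly, as the paper does here. For the lower-doubling half the paper takes a much heavier route: it shows constructively that $P_\mu\colon L^\infty\to\B$ is onto (requiring that $\lambda_n=\mu_{2n+1}^2/(\mu\widehat\mu)_{2n+1}$ is a coefficient multiplier of $\B$, proved via the $V_n$-machinery), and then appeals to \cite[Theorem~2]{PR19} to conclude $\mu\in\M$. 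Your test on $\log\frac{1}{1-z}$ is far more direct and yields $\widecheck{\mathcal D}$ without that external input. The step you flag as ``technically delicate'' is in fact already in the paper: your discrete inequality is precisely the condition in Lemma~\ref{caract. D check}(iii) with $\gamma=1$, after the change of variable $s=1-1/t$. The implication (iii)$\Rightarrow$(i) there---choose $K>e^C$---is two lines, and its discrete analogue is identical. So there is no obstacle, only a reference you overlooked.

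In short, your proof is correct (modulo the block-level rewrite and avoiding the circular citation), and it trades the paper's kernel/projection machinery and the dependence on \cite{PR19} for elementary coefficient estimates and a single test function.
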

The proof of Theorem~\ref{th: reverse} is lengthy and involved and requires some preparatory  results. In fact, the proof of (i)$\Rightarrow$(ii) is based on  asymptotic estimates of the integral means of order one of $D^\mu(B_{\zeta})$, where $B_{\zeta}(z)=(1-\overline{\z}z)^{-2}$ is the Bergman reproducing kernel of $A^2$. It is also worth mentioning that we use
smooth properties of universal Ces\'aro basis of polynomials introduced by  Jevti\'c  and Pavlovi\'c  \cite{JevPac98} to obtain the aforementioned  asymptotic estimates .
 Reciprocally, in order to prove (ii)$\Rightarrow$(i) we use that  $\mathcal{D}=\DD\cap\Dd=\DD\cap \M$, where
$\mu\in\M$ if there exist constants $C=C(\mu)>1$ and $K=K(\mu)>1$ such that $\mu_{x}\ge C\mu_{Kx}$ for all $x\ge1$.
 Moreover, 
 we  prove 
 that for every $f\in \B$, there exists $g\in L^\infty$ such that $P_\mu(g)=f$ and $\nm{g}_{L^\infty} \lesssim \nm{f}_\B$, where $P_\mu$ is the Bergman projection from $L^2_\mu$ 
to $A^2_\mu$. Then,  \cite[Theorem 2]{PR19} ensures that  $\mu\in \M$. 
We get the condition $\mu\in\DD$ by testing \eqref{Desig Bmu radial} on monomials. 
The rest of the proof follows from Theorem~\ref{th: contencion mu radial} and standard properties of the space $\B_0^\mu$.

 The next few lines are dedicated to offer a brief insight to  classes of radial weights
$\DD$, $\Dd$, $\DDD$ and $\M$.
Doubling weights appear in a natural way in many questions on operator theory. For instance, the Bergman projection $P_\mu$, induced by a radial weight $\mu$, acts as a bounded operator from the space $L^\infty$ of bounded complex-valued functions to the Bloch space $\mathcal{B}$ if and only if $\mu\in\DD$, and $P_\mu: L^\infty\to\B$ is bounded and onto if and only if $\mu\in\DDD
=\DD\cap\Dd=\DD\cap \M$.  It is known that 
$\Dd\subset \M$, \cite[Proof of Theorem~3]{PR19} and 
$\Dd\subsetneq \M$
 \cite[ Proof of Theorem~3 and Proposition~14]{PR19}. Further, each standard radial weight obviously belongs to $\DDD$, while $\Dd\setminus\DDD$ contains exponential type weights such as
	$$
	\mu(r)=\exp \left(-\frac{\a}{(1-r^l)^{\b}} \right)\quad 0<\alpha,l,\beta<\infty.
	$$
The class of rapidly increasing weights, introduced in \cite{PelRat}, lies entirely within $\DD\setminus\DDD$, and a typical example of such a weight is 
	$$
	\mu(z)=\frac1{(1-|z|^2)\left(\log\frac{e}{1-|z|^2}\right)^{\alpha}},\quad 1<\alpha<\infty.
	$$
To this end we emphasize that the containment in $\DD$ or $\Dd$ does not require differentiability, continuity or strict positivity. In fact, weights in these classes may vanish on a relatively large part of each outer annulus $\{z:r\le|z|<1\}$ of $\D$. For basic properties of the aforementioned classes, concrete nontrivial examples and more, see \cite{PelSum14,PelRat,PR19} and the relevant references therein.

The rest of the paper is organized as follows: Section~\ref{s2} is devoted to proving the  preliminary results needed in the proofs of the  main  results of the paper. Theorems~\ref{th: contencion mu radial} and \ref{th: reverse} are proved in  Section~\ref{s3} and Section~\ref{s4} contains a proof of Proposition~\ref{th:hinftynobmu}.

Finally, we introduce the following notation that has already been used above in the introduction. The letter $C=C(\cdot)$ will denote an absolute constant whose value depends on the parameters indicated
in the parenthesis, and may change from one occurrence to another.
We will use the notation $a\lesssim b$ if there exists a constant
$C=C(\cdot)>0$ such that $a\le Cb$, and $a\gtrsim b$ is understood
in an analogous manner. In particular, if $a\lesssim b$ and
$a\gtrsim b$, then we write $a\asymp b$ and say that $a$ and $b$ are comparable.

\section{Preliminary results}\label{s2}
\subsection{Background on weights}

Throughout the paper we will employ different descriptions of the classes of radial weights $\DD$, $\Dd$
and $\M$. 
The next result gathers several characterizations of $\DD$ proved in \cite[Lemma~2.1]{PelSum14}. 

\begin{letterlemma}
\label{caract. pesos doblantes}
Let $\om$ be a radial weight. Then, the following statements are equivalent:
\begin{itemize}
    \item[(i)] $\om \in \DD$;
    \item[(ii)] There exist $C=C(\om)\geq 1$ and $\a_0=\a_0(\om)>0$ such that
    $$ \omg(s)\leq C \left(\frac{1-s}{1-t}\right)^{\a}\omg(t), \quad 0\leq s\leq t<1,$$
    for all $\a\geq \a_0$;
    \item[(iii)]
    $$ \om_x=\int_0^1 s^x \om (s) ds\asymp \omg\left(1-\frac{1}{x}\right),\quad x \in [1,\infty);$$
  \item[(iv)] There exists $C(\om)>0$ such that $\om_n\le C \om_{2n}$, for any $n\in \N$.
  \end{itemize}
\end{letterlemma}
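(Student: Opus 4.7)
I would complete the cycle (i) $\Rightarrow$ (ii) $\Rightarrow$ (iii) $\Rightarrow$ (iv) $\Rightarrow$ (i), with (iv) $\Rightarrow$ (i) being the delicate step. The equivalence (i) $\Leftrightarrow$ (ii) is a standard iteration: applying (i) $n$ times gives $\omg(s) \leq C^n\omg(1-(1-s)/2^n)$, and choosing the least $n$ with $(1-s)/2^n \leq 1-t$ produces the polynomial decay with $\alpha_0 = \log_2 C$; larger exponents are subsumed because $(1-s)/(1-t)\geq 1$, and the reverse follows instantly by setting $t = (1+s)/2$.

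For (ii) $\Rightarrow$ (iii), the lower bound $\omega_x \gtrsim \omg(1-1/x)$ is immediate on restricting the moment integral to $[1-1/x,1]$ and using $(1-1/x)^x \geq e^{-1}$. For the upper bound I would integrate by parts to write $\omega_x = x\int_0^1 s^{x-1}\omg(s)\,ds$, split the integral at $1-1/x$, handle the interval $[1-1/x,1]$ by monotonicity of $\omg$, and on $[0,1-1/x]$ insert (ii) with an exponent $\alpha$ large enough that the Beta-type integral $\int_0^1 s^{x-1}(1-s)^\alpha\,ds \asymp x^{-\alpha-1}$ cancels exactly the $x^\alpha$-factor introduced by the doubling estimate.

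For (iii) $\Rightarrow$ (iv) I would use an annulus argument. On $[1-1/n,1-1/(2n)]$ the factor $s^{2n}$ stays between two absolute positive constants (for $n\geq 2$), so
\[
\omega_{2n} \geq \int_{1-1/n}^{1-1/(2n)} s^{2n}\omega(s)\,ds \gtrsim \omg(1-1/n)-\omg(1-1/(2n)).
\]
Combined with the upper bound $\omega_{2n} \lesssim \omg(1-1/(2n))$ from (iii), this yields $\omg(1-1/n) \lesssim \omg(1-1/(2n))$, and a second application of (iii) converts this back into $\omega_n \lesssim \omega_{2n}$.

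The implication (iv) $\Rightarrow$ (i) is where I expect the main obstacle. The same annulus identity gives, from (iv) alone,
\[
\omg(1-1/n) - \omg(1-1/(2n)) \lesssim \omega_n - \omega_{2n} \leq (C-1)\omega_{2n},
\]
so the whole problem reduces to establishing $\omega_{2n} \lesssim \omg(1-1/(2n))$, which is precisely the non-trivial half of (iii) itself; this circularity is the real sticking point. I would try to extract it via a dyadic tail decomposition $\omega_m = \sum_k \int_{1-2^{k+1}/m}^{1-2^k/m} s^m \omega(s)\,ds$, exploiting the super-exponential decay $s^m \leq e^{-2^k}$ together with iterated applications of (iv) to control the $\omg$-mass on each annulus and balance the geometric growth against the super-exponential gain. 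Once (i) is established at the discrete radii $r = 1-1/n$, a monotonicity argument sandwiching $r$ between $1-1/n$ and $1-1/(n+1)$ extends it to arbitrary $r\in[0,1)$.
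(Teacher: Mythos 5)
The paper does not prove this lemma at all --- it is quoted as Lemma~A from \cite[Lemma~2.1]{PelSum14} --- so there is no internal proof to compare against; I am judging your argument on its own. The chain (i)$\Rightarrow$(ii)$\Rightarrow$(iii)$\Rightarrow$(iv) is correct and is the standard route. Two cosmetic repairs: $(1-1/x)^x$ is bounded \emph{above} by $e^{-1}$, not below; what you actually need is a positive lower bound such as $1/4$ for $x\ge 2$, with $x\in[1,2]$ handled separately (there $\omg(1-1/x)\asymp\omg(0)\asymp\om_x$ using (ii) once). And in (ii)$\Rightarrow$(iii) no ``large enough'' $\alpha$ is required, since $\int_0^1 s^{x-1}(1-s)^\alpha\,ds\asymp x^{-\alpha-1}$ for every fixed $\alpha>0$.

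The genuine gap is (iv)$\Rightarrow$(i), which you explicitly leave as a plan and correctly identify as circular in its naive form: you must not control the $\omg$-mass of an annulus by the doubling you are trying to prove. The fix is to bound that mass by a \emph{moment} and to truncate the dyadic sum. Using the universal inequality $\omg(1-1/x)\le 4\om_x$ (restrict the moment integral to $[1-1/x,1]$) and $k+1$ iterations of (iv),
$$
\int_{1-2^{k+1}/m}^{1-2^{k}/m}s^{m}\om(s)\,ds\;\le\; e^{-2^{k}}\,\omg\!\left(1-\tfrac{2^{k+1}}{m}\right)\;\le\;4e^{-2^{k}}\,\om_{m/2^{k+1}}\;\le\;4e^{-2^{k}}C^{k+1}\om_{m}.
$$
Since $\sum_k e^{-2^k}C^{k+1}<\infty$ (super-exponential beats geometric), choose $k_0=k_0(C)$ with $4\sum_{k\ge k_0}e^{-2^k}C^{k+1}\le\frac12$; splitting $\om_m$ at $1-2^{k_0}/m$ and absorbing gives $\om_m\le 2\,\omg(1-2^{k_0}/m)$. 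Without the truncation the absorption fails whenever $C\ge e$ (already the $k=0$ term exceeds $1$), so this choice of $k_0$ is not optional. Combined with the trivial bound $\om_m\ge(1-2^{k_0}/m)^m\,\omg(1-2^{k_0}/m)\gtrsim\omg(1-2^{k_0}/m)$ for $m\ge 2^{k_0+1}$, one gets $\om_m\asymp\omg(1-2^{k_0}/m)$, and then a single further application of (iv) closes the loop:
$$
\omg\!\left(1-\tfrac{2^{k_0}}{m}\right)\asymp\om_m\le C\,\om_{2m}\asymp\omg\!\left(1-\tfrac{2^{k_0}}{2m}\right),
$$
which is the doubling inequality at $r=1-2^{k_0}/m$; sandwiching an arbitrary $r\in[1/2,1)$ between consecutive such radii, and disposing of $r\in[0,1/2)$ by $\omg(r)\le\omg(0)\le\frac{\omg(0)}{\omg(3/4)}\omg(\frac{1+r}{2})$, yields $\om\in\DD$. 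So your mechanism is the right one, but as submitted the step is a proposal rather than a proof, and the truncation plus the final moment-to-$\omg$ chaining are the missing ingredients.
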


We will use a couple of descriptions of the class $\Dd$ which are known for experts. We include a proof for the sake of completeness.
\begin{lemma}
\label{caract. D check}
Let $\om$ be a radial weight. Then, the
following statements are equivalent:
\begin{itemize}
\item[(i)] $\om	\in \Dd$;
\item[(ii)] There exist $C=C(\om)>0$ and $\b=\b(\om)>0$ such that
$$\omg(s)\leq C \left(\frac{1-s}{1-t}\right)^{\b}\omg(t), \quad 0\leq  t\leq s<1;$$
\item[(iii)] For each (or some) $\g >0$ there exists $C=C(\g, \om)>0$ such that
$$ \int_{0}^r \frac{ds}{\omg(s)^{\g}(1-s)} \leq \frac{C}{ \omg(r)^{\g}},\quad 0\leq r < 1.$$
\end{itemize}
\end{lemma}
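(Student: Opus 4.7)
The plan is to prove the chain of implications (i)$\Rightarrow$(ii)$\Rightarrow$(iii for every $\gamma$)$\Rightarrow$(iii for some $\gamma$)$\Rightarrow$(i). The second implication in the chain is tautological, so I only have to address three implications.

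For (i)$\Rightarrow$(ii) I will iterate the defining inequality. Fix $t\in[0,1)$ and set $t_n=1-(1-t)/K^n$, so that applying (i) $n$ times yields $\widehat{\omega}(t_n)\le C^{-n}\widehat{\omega}(t)$. Writing $C^{-n}=((1-t_n)/(1-t))^{\log_K C}$, this gives (ii) along the subsequence with $\beta=\log_K C>0$. For a generic $s\in[t,1)$, I pick the unique $n$ with $t_n\le s\le t_{n+1}$ and use the monotonicity of $\widehat{\omega}$ together with $1-s\ge(1-t)/K^{n+1}$ to absorb the one-step factor $K$ into the constant.

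For (ii)$\Rightarrow$(iii) I rearrange (ii) (with the roles of $s$ and $r$ in (ii) played by $s\le r$) into the pointwise bound
\begin{equation*}
\frac{1}{\widehat{\omega}(s)^{\gamma}}\le \frac{C^{\gamma}}{\widehat{\omega}(r)^{\gamma}}\left(\frac{1-r}{1-s}\right)^{\beta\gamma},\qquad 0\le s\le r<1,
\end{equation*}
and integrate against $ds/(1-s)$ from $0$ to $r$. The resulting elementary integral $\int_0^r(1-s)^{-\beta\gamma-1}\,ds$ is bounded by $(1-r)^{-\beta\gamma}/(\beta\gamma)$, so the $(1-r)^{\beta\gamma}$ prefactor cancels and (iii) follows with constant $C^{\gamma}/(\beta\gamma)$ for every $\gamma>0$.

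The main new idea is in (iii)$\Rightarrow$(i). Starting from (iii) for some fixed $\gamma>0$, for any $0\le r<r'<1$ I exploit the monotonicity of $\widehat{\omega}$ in the opposite direction:
\begin{equation*}
\frac{1}{\widehat{\omega}(r)^{\gamma}}\log\frac{1-r}{1-r'}\le \int_r^{r'}\frac{ds}{\widehat{\omega}(s)^{\gamma}(1-s)}\le \int_0^{r'}\frac{ds}{\widehat{\omega}(s)^{\gamma}(1-s)}\le \frac{C}{\widehat{\omega}(r')^{\gamma}}.
\end{equation*}
Specializing to $r'=1-(1-r)/K$ converts this into $\widehat{\omega}(r)^{\gamma}\ge (\log K/C)\,\widehat{\omega}(r')^{\gamma}$, and choosing $K$ large enough that $\log K>C$ gives the strict improvement $\widehat{\omega}(r)\ge C'\widehat{\omega}(1-(1-r)/K)$ with $C'=(\log K/C)^{1/\gamma}>1$, uniformly in $r$. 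This is precisely the definition of $\widecheck{\mathcal{D}}$.

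I expect the routine step (i)$\Rightarrow$(ii) to require the most careful bookkeeping, because the jump from the subsequence $\{t_n\}$ to arbitrary $s\in[t,1)$ must be done so that the resulting constants depend only on $C$ and $K$; the other implications are essentially one-line calculations once the right monotonicity is invoked.
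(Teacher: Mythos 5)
Your proof is correct, and for the two implications the paper actually proves --- (ii)$\Rightarrow$(iii) by integrating the pointwise lower bound on $\widehat{\omega}(s)$, and (iii)$\Rightarrow$(i) via the logarithmic lower bound $\widehat{\omega}(t)^{-\gamma}\log\frac{1-t}{1-r}\le\int_t^r\frac{ds}{\widehat{\omega}(s)^{\gamma}(1-s)}$ with $r=1-\frac{1-t}{K}$ and $K$ large --- it coincides with the paper's argument essentially line for line. The only difference is that the paper disposes of (i)$\Leftrightarrow$(ii) by citing Lemma~B of \cite{PeldelaRosa22}, whereas you prove (i)$\Rightarrow$(ii) directly by iterating the defining inequality along $t_n=1-(1-t)/K^n$ and taking $\beta=\log_K C$; that iteration is carried out correctly (the one-step loss is indeed absorbed into the constant) and makes your write-up self-contained, at the harmless cost of only closing the cycle (i)$\Rightarrow$(ii)$\Rightarrow$(iii)$\Rightarrow$(i) rather than proving (ii)$\Rightarrow$(i) separately.
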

\begin{proof}
The equivalence (i)$\Leftrightarrow$(ii) was proved in \cite[Lemma~B]{PeldelaRosa22}. Next, assume (ii) holds. Then,
$$ \int_{0}^r \frac{ds}{\omg(s)^{\g}(1-s)} \leq 
\frac{C^\gamma(1-r)^{\g\b}}{\omg(r)^{\g}}\int_{0}^r \frac{ds}{(1-s)^{1+\g\b}} 
\lesssim 
\frac{1}{ \omg(r)^{\g}},\quad 0\leq r < 1.$$
Now assume that (iii) holds. Then, if $0\le t\le r<1$
$$ \frac{1}{\omg(t)^\g}\log\frac{1-t}{1-r}\le\int_{t}^r \frac{ds}{\omg(s)^{\g}(1-s)}\le \int_{0}^r \frac{ds}{\omg(s)^{\g}(1-s)} \leq \frac{C}{ \omg(r)^{\g}}.$$
So if $r=1-\frac{1-t}{K}$ and $K>1$, we get 
$$ \omg(t)\ge \left( \frac{\log K}{C}\right)^{1/\g} \omg\left(1-\frac{1-t}{K} \right), \quad 0\le t<1.$$
Consequently, taking $K>e^C$, we get $\om\in\Dd$. This finishes the proof.

\end{proof}

\subsection{Estimates of the integral means  of fractional derivative of Bergman reproducing kernel.  }
 
For any radial weight $\om$, the norm convergence in $A^2_{\om}$ implies the uniform convergence on compact subsets of $\D$, and hence each point evaluation~$L_z$ is a bounded linear functional on~$A^2_\om$. Therefore there exist Bergman reproducing kernels $B^\om_z\in A^2_\om$ such that
	$$
	L_z(f)=f(z)=\langle f, B_z^{\om}\rangle_{A^2_\om}=\int_{\D}f(\z)\overline{B_z^{\om}(\z)}\om(\z)\,dA(\z),\quad f \in A^2_{\om}.
	$$

In this section we are going to obtain asymptotic estimates of  the  integral means of order one of
$D^\mu(B_z^{\om})$ for $\omega,\mu\in\DD$.
In order to get this result, we need establish some notation and previous results. 

Let $W(z)=\sum\limits_{k \in J}b_k z^k$ be a polynomial, where $J$ denote a finite subset of $\N$ and $f(z)=\sum_{k=0}^\infty a_k z^k\in\H(\D)$. The Hadamard product
$$(W\ast f)(z)=\sum_{k=0}^\infty a_k b_kz^k,\quad z\in\D,$$
is well defined. Furthermore, it is easy to observe that
    \begin{equation}
    (W\ast f)(e^{it})
    =\frac{1}{2\pi}\int_{-\pi}^\pi W(e^{i(t-\theta)})f(e^{i\t})\,d\t.
    \end{equation}
For a given $C^\infty$-function $\Phi:\mathbb{R}\to\C$ with compact support,
set
    $$
    A_{\Phi,m}=\max_{x\in\mathbb{R}}|\Phi(x)|+m\max_{x\in\mathbb{R}}|\Phi^{(m)}(x)|,\quad m\in\N\cup\{0\},
    $$
and define the polynomials
    \begin{equation}
    W_n^\Phi(z)=\sum_{k\in\mathbb
    Z}\Phi\left(\frac{k}{n}\right)z^{k},\quad n\in\N.
    \end{equation}

The Hardy space $H^p$ consists of $f\in\H(\D)$ for which
    $\|f\|_{H^p}=\sup_{0<r<1}M_p(r,f)<\infty$,
where
    $
    M_p(r,f)=\left (\frac{1}{2\pi}\int_0^{2\pi}
    |f(re^{i\theta})|^p\,d\theta\right )^{\frac{1}{p}},\, 0<p<\infty,
    $
and 
    $
    M_\infty(r,f)=\max_{0\le\theta\le2\pi}|f(re^{i\theta})|.
$
The next result can be  found in \cite[pp.~111--113]{Pabook}.

\begin{lettertheorem}
\label{Th Polinomios Cesaro}
Let $\Phi:\mathbb{R}\to\C$ be a compactly supported $C^{\infty}$-function. Then, for each $0<p<\infty$ and $m \in \N$ with $mp>1$, there exists a constant $C=C(p)>0$ such that 
$$\Vert W_n^{\Phi}*f\Vert_{H^p}\leq C A_{\Phi,m}\Vert f \Vert_{H^p}$$
for all $f \in H^p$ and $n \in \N$.
\end{lettertheorem}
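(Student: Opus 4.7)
The plan is to prove Theorem~\ref{Th Polinomios Cesaro} by first extracting a sharp pointwise decay estimate for the kernel $W_n^\Phi$ on the circle and then combining it with standard boundedness principles on $H^p$. The starting point is Poisson summation applied to $W_n^\Phi(e^{i\theta})=\sum_{k\in\Z}\Phi(k/n)\,e^{ik\theta}$: viewing this as the $n$-periodization of the inverse Fourier transform of $\Phi$, and integrating by parts $m$ times using that $\Phi\in C^m$ has compact support, one obtains the fundamental estimate
$$
\abs{W_n^\Phi(e^{i\theta})}\lesssim \frac{A_{\Phi,m}\,n}{(1+n|\theta|)^m},\qquad |\theta|\le \pi.
$$
The factor $A_{\Phi,m}$ appears naturally from the supremum norm of $\Phi^{(m)}$ in the integration by parts, and this pointwise bound is the backbone of the whole argument.

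For $p\ge 1$, the estimate gives $\|W_n^\Phi\|_{L^1(\T)}\lesssim A_{\Phi,m}$ uniformly in $n$ (any $m\ge 2$ suffices). Since
$$
(W_n^\Phi\ast f)(e^{it})=\frac{1}{2\pi}\int_{-\pi}^{\pi} W_n^\Phi(e^{i(t-\theta)})\,f(e^{i\theta})\,d\theta,
$$
Minkowski's integral inequality yields $\|W_n^\Phi\ast f\|_{L^p(\T)}\le \|W_n^\Phi\|_{L^1(\T)}\,\|f\|_{L^p(\T)}$; since the Hadamard product only keeps nonnegative Taylor coefficients, $W_n^\Phi\ast f\in\H(\D)$, and hence lies in $H^p$ with the correct norm bound.

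The main obstacle lies in the range $0<p<1$, where $L^1$-control on the kernel does not translate into $L^p$-convolution inequalities and $H^p$ has to be accessed via a maximal characterization (e.g.\ the radial maximal function $f^+(\theta)=\sup_{0<r<1}|f(re^{i\theta})|$, with $\|f\|_{H^p}\asymp\|f^+\|_{L^p(\T)}$). The standard device is to prove the pointwise majorization
$$
\bigl|(W_n^\Phi\ast f)(e^{it})\bigr|\lesssim A_{\Phi,m}\,\bigl(M|f|^q\bigr)^{1/q}(e^{it})
$$
for some $0<q<p$, where $M$ is the Hardy--Littlewood maximal operator on $\T$; the maximal theorem in $L^{p/q}$ then closes the argument. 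The majorization is obtained by a dyadic decomposition $W_n^\Phi=\sum_k \psi_k$ adapted to the shells $|\theta|\asymp 2^k/n$, estimating each $\psi_k\ast f$ via the decay bound, and summing in $k$; the hypothesis $mp>1$ is exactly what forces the geometric series to converge. The delicate point is balancing $q$ and $m$: larger $m$ buys better tail control of $W_n^\Phi$, while smaller $q$ improves the maximal inequality, and the interplay must be tuned so that both succeed simultaneously under the single quantitative condition $mp>1$.
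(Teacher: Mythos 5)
First, a point of reference: the paper does not prove Theorem~\ref{Th Polinomios Cesaro} at all --- it is quoted from Pavlovi\'c's book \cite[pp.~111--113]{Pabook} --- so there is no in-paper argument to compare against. Your architecture (the kernel decay $|W_n^\Phi(e^{i\theta})|\lesssim A_{\Phi,m}\,n(1+n|\theta|)^{-m}$, Young's inequality for $p\ge1$, and a maximal-function majorization for $p<1$ with the exponent window $1/m<q<p$ explaining the hypothesis $mp>1$) is indeed the standard route taken in that reference, which obtains the decay by $m$-fold Abel summation rather than Poisson summation; that difference is immaterial.

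There is, however, a genuine gap at the heart of the case $0<p<1$. The dyadic decomposition you describe --- split $W_n^\Phi=\sum_k\psi_k$ over the shells $|\theta|\asymp 2^k/n$ and estimate each $\psi_k\ast f$ ``via the decay bound'' --- only yields, after summing the geometric series, the bound $|(W_n^\Phi\ast f)(e^{it})|\lesssim A_{\Phi,m}\,Mf(e^{it})$, i.e.\ the case $q=1$ of your majorization, which is useless for $p\le1$ since $M$ is not bounded on $L^p(\T)$ there. To reach $(M|f|^q)^{1/q}$ with $q<1$ you must first dominate $|f(e^{i(t-\theta)})|$ by $(1+n|\theta|)^{1/q}\bigl(M|f|^q(e^{it})\bigr)^{1/q}$, and this step is \emph{not} a consequence of the kernel decay: it is a Peetre/Nikolskii-type inequality for band-limited analytic functions, whose proof rests on the sub-mean-value property of $|g|^q$ for $g\in\H(\D)$ (equivalently the Hardy--Littlewood lemma $|g(z)|^q\lesssim(1-|z|)^{-2}\int_{|w-z|<(1-|z|)/2}|g(w)|^q\,dA(w)$). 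This is the only place where analyticity of $f$ enters, and it cannot be dispensed with: for general $f\in L^p(\T)$ with $p<1$ the stated inequality is false (test on Fej\'er kernels). Your sketch never invokes analyticity in the $p<1$ case, so as written the argument does not close. A secondary, more peripheral issue: for $p>1$ the hypothesis $mp>1$ admits $m=1$, and neither of your two mechanisms reaches that case (the $L^1$ kernel bound and the maximal majorization both force $m\ge2$); the case $m=1$, $p>1$ requires instead a bounded-variation multiplier argument, using $\sum_k|\Phi(\tfrac{k+1}{n})-\Phi(\tfrac{k}{n})|\lesssim A_{\Phi,1}$ together with the uniform $L^p$-boundedness of the partial-sum projections. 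Since the paper only ever applies the theorem with $p=1$ and $m=2$, neither issue propagates downstream, but the first one is a real hole in the proof of the theorem as stated.
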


A particular case of the previous construction is useful for our purposes. By following \cite[Section~2]{JevPac98} (see also \cite[Proposition~4]{PeldelaRosa22} ), let $\Psi:\mathbb{R}\to\mathbb{R}$ be a $C^\infty$-function such that $\Psi\equiv1$ on $(-\infty,1]$, $\Psi\equiv0$ on $[2,\infty)$ and $\Psi$ is decreasing and positive on $(1,2)$. Set $\psi(t)=\Psi\left(\frac{t}{2}\right)-\Psi(t)$ for all $t\in\mathbb{R}$. Let $V_{0}(z)=1+z$
and
    \begin{equation}\label{vn}
    V_{n}(z)=W^\psi_{2^{n-1}}(z)=\sum_{k=0}^\infty
    \psi\left(\frac{k}{2^{n-1}}\right)z^j=\sum_{k=2^{n-1}}^{2^{n+1}-1}
    \psi\left(\frac{k}{2^{n-1}}\right)z^j,\quad n\in\N.
    \end{equation}

these polynomials have the following properties (see
\cite[p. 175--177]{JevPac98}):
    \begin{equation}
    \begin{split}\label{propervn}
    &f(z)=\sum_{n=0}^\infty (V_n\ast f)(z),\quad f\in\H(\D),\\
    &\|V_n\ast f\|_{H^p}\le C\|f\|_{H^p},\quad f\in H^p,\quad 0<p<\infty,\\
    &\|V_n\|_{H^p}\asymp 2^{n(1-1/p)}, \quad 0< p<\infty.
    \end{split}
    \end{equation}

\begin{proposition}
\label{medias orden 1}
Let  $\om ,\, \mu \in \DD.$ Then,
$$ M_1(r, D^{\mu}(B^{\om}_{a}))\asymp 1+\int_0^{r\vert a\vert}\frac{1}{\omg(t)\mug(t)(1-t)}dt, \quad a \in \D,\,r\in [0,1).$$
\end{proposition}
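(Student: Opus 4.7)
The strategy is to first reduce to estimating a single-variable circular mean of a positive-coefficient power series, then apply a dyadic Ces\`aro decomposition tuned to the moment asymptotics of $\omega,\mu\in\DD$ furnished by Lemma~\ref{caract. pesos doblantes}.

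Since the $n$-th Taylor coefficient of $D^\mu(B^\omega_a)$ equals $\bar a^n c_n$ with $c_n:=(2\omega_{2n+1}\mu_{2n+1})^{-1}>0$, rotational invariance of $d\theta$ on $\T$ gives $M_1(r,D^\mu(B^\omega_a))=M_1(r|a|,h)$ for the positive-coefficient series $h(z):=\sum_{n\ge 0}c_n z^n$. Setting $s:=r|a|\in[0,1)$, the claim becomes $M_1(s,h)\asymp 1+\int_0^s dt/(\omg(t)\mug(t)(1-t))$. Lemma~\ref{caract. pesos doblantes}(iii) applied to both $\omega$ and $\mu$ yields $c_n\asymp c_{2^k}\asymp[\omg(1-2^{-k})\mug(1-2^{-k})]^{-1}$ uniformly for $n\in[2^{k-1},2^{k+1})$, and the substitution $1-t=2^{-u}$ rewrites the target integral as $1+\sum_{0\le k\le k_s}c_{2^k}$, with $k_s:=\lfloor\log_2(1-s)^{-1}\rfloor$.

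For the upper bound I would decompose $h=\sum_{k\ge 0}V_k\ast h$ via \eqref{propervn}. On the $k$-th block the coefficients $\psi(j/2^{k-1})c_j$ differ from $c_{2^k}\psi(j/2^{k-1})$ by a bounded \emph{monotone} factor $c_j/c_{2^k}$ (monotonicity follows since $\omega_n,\mu_n$ are decreasing in $n$); writing this factor as a Stieltjes integral reduces $\|V_k\ast h\|_{H^1}$ to an integral of $H^1$-norms of truncated Ces\`aro polynomials, each $O(1)$ by Theorem~\ref{Th Polinomios Cesaro}, and hence $\|V_k\ast h\|_{H^1}\lesssim c_{2^k}$. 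Combined with the Cauchy--Schwarz bound $M_1(s,V_k\ast h)\le M_2(s,V_k\ast h)\lesssim c_{2^k}\cdot 2^{k/2}\cdot s^{2^{k-1}}$ (which exploits the spectral support of size $\asymp 2^k$), summing on $k$ and splitting at $k=k_s$ produces $M_1(s,h)\lesssim 1+\sum_{k\le k_s}c_{2^k}$, since the $k>k_s$ tail decays super-geometrically in $k-k_s$.

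The lower bound is the technical crux. The summand $1$ is immediate from $M_1(s,h)\ge|h(0)|=c_0\asymp 1$. For the integral contribution, the plan is to exploit positivity of the $c_n$ together with the angular localization of each $V_k\ast h_s$, where $h_s(\zeta):=h(s\zeta)$. Each $V_k\ast h_s$ is a positive-coefficient polynomial of bandwidth $\asymp 2^k$ whose $L^1(\T)$-mass $\asymp c_{2^k}$ is essentially concentrated on $\{|\theta|\lesssim 2^{-k}\}$, with $\Real(V_k\ast h_s)\gtrsim c_{2^k}\cdot 2^k$ on a sub-arc of length $\asymp 2^{-k}$ near $\theta=0$. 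Testing $\|h_s\|_{L^1(\T)}$ against a non-negative function $g=\sum_k\chi_{I_k}$ with $\|g\|_\infty\le 1$, where $I_k$ is an arc of length $\asymp 2^{-k}$ adapted to the $k$-th dyadic angular scale, one has $\Real(V_j\ast h_s)\ge 0$ on $I_k$ for every $j\le k$ (since $|n\theta|\le 2^{j+1-k}\lesssim 1$ forces $\cos(n\theta)>0$ for $n$ in the $j$-th block); the diagonal contributions then sum to $\asymp\sum_{k\le k_s}c_{2^k}$ and the non-negative cross-terms only help. The main obstacle is exactly this step: a naive blockwise bound delivers only $\max_k c_{2^k}$, and extracting the full sum requires the careful design of disjoint test arcs $I_k$ so that $\Real(V_j\ast h_s)\ge 0$ is preserved throughout, the $k$-th block contributes fully on $I_k$, and the decay of $V_j\ast h_s$ for $j>k$ is sharp enough to neutralize any sign ambiguity.
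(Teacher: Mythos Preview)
Your reduction to the single positive-coefficient series $h(z)=\sum c_n z^n$ with $c_n=(2\omega_{2n+1}\mu_{2n+1})^{-1}$ and $s=r|a|$ is correct and matches the paper. After that, both halves of your argument have genuine gaps.

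\textbf{Upper bound.} Your scheme combines two estimates: $\|V_k\ast h\|_{H^1}\lesssim c_{2^k}$ via a Stieltjes/monotone trick, and $M_1(s,V_k\ast h)\le M_2(s,V_k\ast h)\lesssim c_{2^k}\,2^{k/2}\,s^{2^{k-1}}$ via Cauchy--Schwarz. Neither closes. First, the Stieltjes step rewrites the block as an integral of \emph{sharply truncated} copies of $V_k$, i.e.\ polynomials $\sum_{j\ge j_0}\psi(j/2^{k-1})z^j$; these are not covered by Theorem~\ref{Th Polinomios Cesaro} (which requires a $C^\infty$ symbol), and in fact a hard cut at a point where $\psi\asymp 1$ produces a Dirichlet-type jump, so one cannot expect a uniform $O(1)$ bound on their $H^1$ norms. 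Second, even granting $\|V_k\ast h\|_{H^1}\lesssim c_{2^k}$ for $k\le k_s$, the Cauchy--Schwarz tail gives
\[
\sum_{k>k_s} c_{2^k}\,2^{k/2}\,s^{2^{k-1}}\ \asymp\ c_{2^{k_s}}\,2^{k_s/2}\ \asymp\ c_{2^{k_s}}(1-s)^{-1/2},
\]
which is \emph{not} dominated by $\sum_{k\le k_s}c_{2^k}$ in general (take $c_{2^k}\asymp 2^{\alpha k}$). The paper avoids both problems by incorporating the radial factor from the start: it writes the $n$-th block multiplier as a product $\Phi_{1,n}(x)\Phi_{2,n}(x)$ with $\Phi_{1,n}(x)=r^x/\omega_{2x+1}$ and $\Phi_{2,n}(x)=|a|^x/\mu_{2x+1}$ on $[2^{n-1},2^{n+1}]$, extends each smoothly, and uses the $\DD$ hypothesis to bound $A_{\Phi_{i,n},2}$. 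Theorem~\ref{Th Polinomios Cesaro} then gives directly $\|V_n\ast h_s\|_{H^1}\lesssim c_{2^n}\,s^{2^{n-1}}$, with no extraneous $2^{n/2}$ and no hard truncation.

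\textbf{Lower bound.} You correctly flag this as the crux, but the angular test-arc construction you outline is both delicate and unnecessary. The paper dispatches the lower bound in one line via Hardy's inequality $\sum_{n\ge0}|\widehat f(n)|/(n+1)\lesssim\|f\|_{H^1}$: applied to $h_s$ this gives
\[
\sum_{n\ge0}\frac{c_n s^n}{n+1}\ \lesssim\ M_1(s,h),
\]
and the left-hand side is comparable, by dyadic blocking and Lemma~\ref{caract. pesos doblantes}(iii), to $\sum_{k\lesssim k_s}c_{2^k}\asymp 1+\int_0^s\frac{dt}{\widehat\omega(t)\widehat\mu(t)(1-t)}$. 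This bypasses entirely the sign and disjointness issues you identify as the main obstacle.
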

\begin{proof}
Firstly, assume $\frac{1}{2}\le |a|, r<1$.
Bearing in mind \eqref{propervn}, 
\begin{equation}\label{eq:m1}
M_1(r, D^{\mu}(B^{\om}_a ))\leq  \sum\limits_{n=0}^{\infty} \|V_n* (D^{\mu}(B^{\om}_a ))_r\|_{H^1}, 
\end{equation}
On the one hand, by Lemma \ref{caract. pesos doblantes} and the formula
$B^\om_a(z)=\sum_{n=0}^\infty \frac{(\overline{a}z)^n}{2\omega_{2n+1}}$, it follows that
\begin{equation}\begin{split}\label{geq:im1}
\|V_0* (D^{\mu} (B^{\om }_a) )_r\|_{H^1}&= \left\|\frac{1}{2\om_1\mu_1}+\frac{\overline{a}rz}{2\om_3\mu_3}\right\|_{H^1} \leq \frac{1}{2\om_1\mu_1} + \frac{|a|r}{2\om_3\mu_3}
\\ & \lesssim \frac{1}{\mug(r|a|)\omg(r|a|)(1-r|a|)} \int_{\frac{4r|a|-1}{3}}^{r|a|}dt
\\& \asymp \int_{\frac{4r|a|-1}{3}}^{r|a|}\frac{1}{\mug(t)\omg(t)(1-t)}dt \leq \int_{0}^{r|a|}\frac{1}{\mug(t)\omg(t)(1-t)}dt, \quad r, |a|\geq \frac{1}{2}. 
\end{split}\end{equation}

Analogously, 
\begin{equation}\label{geq:im1n}
\|V_1* (D^{\mu} (B^{\om}_a) )_r\|_{H^1} \lesssim \int_{0}^{r|a|}\frac{1}{\mug(t)\omg(t)(1-t)}dt, \quad r, |a|\geq \frac{1}{2}.
\end{equation}
 Now, for each  $n \in \N\setminus\{1\}$
 let us consider
the  functions
 $$\varphi_{1,n}(x)=\frac{r^x}{\om_{2x+1}}\chi_{[2^{n-1}, 2^{n+1}-1]}(x), \quad \frac{1}{2}\le r<1,$$
 $$\varphi_{2,n}(x)=\frac{|a|^x}{\mu_{2x+1}}\chi_{[2^{n-1}, 2^{n+1}-1]}(x), \quad \frac{1}{2}\le |a|<1,$$
and choose $C^\infty$-functions $\Phi_{1,n}$ and $\Phi_{2,n}$ with compact support contained in $[2^{n-2},2^{n+2}]$ such that $\Phi_{1,n}=\varphi_{1,n}$ and $\Phi_{2,n}=\varphi_{2,n}$ in $[2^{n-1}, 2^{n+1}-1]$.
By following the proof of   \cite[(3.12)]{PeldelaRosa22}, there exist $C_1=C_1(\om)>0$ and $C_2=C_2(\mu)>0$ such that  
\begin{equation}
\label{geq:im2}
A_{\Phi_{1,n},2}\le C_1\frac{ r^{2^{n-1}}}{\om_{2^{n}}}\quad \text{ and }\quad A_{\Phi_{2,n},2}\le C_1\frac{ |a|^{2^{n-1}}}{\mu_{2^{n}}},\quad \frac{1}{2}\le r,|a|<1.
\end{equation}
Then, if $a=|a|e^{i\t}$
\begin{equation*}
\begin{split}
 V_n*(D^{\mu} (B^{\om}_a))_r(z)&=\frac{1}{2}\sum\limits_{k=2^{n-1}}^{2^{n+1}-1}\psi\left(\frac{k}{2^{n-1}}\right)\frac{r^k}{\om_{2k+1}}\frac{|a|^k}{\mu_{2k+1}}(ze^{-i\t})^k
 \\ &=\frac{1}{2}\sum\limits_{k=2^{n-1}}^{2^{n+1}-1}\psi\left(\frac{k}{2^{n-1}}\right)\Phi_{1,n}(k)\Phi_{2,n}(k)(ze^{-i\t})^k\\
 &=(W_1^{\Phi_{1,n}}*W_1^{\Phi_{2,n}}*V_n)(ze^{-i\t}).
\end{split}
\end{equation*}
Therefore, Theorem \ref{Th Polinomios Cesaro}, \eqref{geq:im2} and \eqref{propervn} implies that there is $C=C(\mu,\om)>0$ such that for each $n\in\N\setminus\{1\}$
\begin{equation*}
\begin{split}
\Vert V_n*(D^{\mu}(B^{\om}_a))_r\Vert_{H^1}\le C A_{\Phi_{1,n}, 2} A_{\Phi_{2,n}, 2}\Vert V_n\Vert_{H^1}\le C \frac{\left( r|a|\right)^{2^{n-1}}}{\om_{2^n}\mu_{2^{n}}}, \quad\frac{1}{2}\le r, |a|<1.
\end{split}
\end{equation*}
Then, arguing as in the proof of
 \cite[Lemma~6]{PeldelaRosa22} and using the hypotheses $\om , \mu \in \DD$ it follows that 
\begin{equation}
\begin{split}\label{geq:mi3}
\sum\limits_{n=2}^{\infty} \|V_n* (D^{\mu} (B^{\om}_a) )_r\|_{H^1} &\lesssim \left(\sum\limits_{n=2}^{\infty}\frac{\left( r|a|\right)^{2^{n-1}}}{\om_{2^n}\mu_{2^{n}}}\right)
\\ & \lesssim \int_{0}^{r|a|}\frac{1}{\mug(t)\omg(t)(1-t)}dt, \quad r, |a|\geq \frac{1}{2}.
\end{split}
\end{equation}
By joining \eqref{eq:m1}, \eqref{geq:im1}, \eqref{geq:im1n} and \eqref{geq:mi3},
\begin{equation}
\label{r,a>1/2}
 M_1(r, D^{\mu}(B^{\om}_{a}))\lesssim \int_0^{r\vert a\vert}\frac{1}{\omg(t)\mug(t)(1-t)}dt, \quad r, |a| \geq \frac{1}{2}.
\end{equation}
Moreover, if $|a|\leq \frac{1}{2}$ or $r\leq \frac{1}{2}$,
\begin{equation}
\label{r>1/2 o a >1/2}
 M_1(r, D^{\mu}(B^{\om}_{a}))\leq \sum\limits_{n=0}^{\infty}\frac{\left(\frac{1}{2}\right)^n}{2\mu_{2n+1}\om_{2n+1}}\lesssim 1.
\end{equation}
Thus \eqref{r,a>1/2} and \eqref{r>1/2 o a >1/2} yields
$$  M_1(r, D^{\mu}(B^{\om}_{a}))\lesssim 1+\int_0^{r\vert a\vert}\frac{1}{\omg(t)\mug(t)(1-t)}dt.$$
The reverse inequality follows from Hardy's inequality \cite[Section 3.6]{Duren} and  the proof of \cite[Lemma~6]{PeldelaRosa22}. This finishes the proof.
\end{proof}

\subsection{Previous results on $\B^\mu$ and $\B_0^\mu$. }
For a radial weight $\omega$
 the orthogonal projection from~$L^2_\om$ to~$A^2_\om$ is given by
    $$
    P_\om(f)(z) = \int_\D f(\zeta)\overline{B^\om_z(\zeta)}\om(\zeta)\,dA(\zeta),\quad z\in\D.
    $$
If $\om=1$,  we simply write $P_\omega=P$ and $B^\om_z(\zeta)=B_z(\zeta)=(1-\overline{z}\zeta)^{-2}$, $z.\z\in\D$.

We begin with a useful representation of the fractional derivative of 
$P_\om(f)$, $ f \in L^p_{\om}$, $1<p<\infty$. 
\begin{proposition}
\label{repr.}
Let $\mu$ be a radial weight and $\om\in \DD$. Then, 
\begin{equation}\begin{split}\label{representation}
D^{\mu}(f)(z) &=\int_{\D} f(\z)D^{\mu}(B^{\om}_{\z})(z)\om (\z)dA(\z), \quad f\in A^1_\omega, \quad z\in\D,
\end{split}\end{equation}
and
\begin{equation*}\begin{split}\label{representationPw} 
D^{\mu}P_{\om}(f)(z) &=\int_{\D} f(\z)D^{\mu}(B^{\om}_{\z})(z)\om (\z)dA(\z), \quad  f \in L^p_{\om},\quad
1<p<\infty,\quad z \in \D.
\end{split}\end{equation*}  
\end{proposition}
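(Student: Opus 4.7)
The plan is to prove both identities by expanding the Bergman reproducing kernel in its Taylor series, applying $D^\mu$ termwise, and then interchanging series and integrals via Fubini's theorem. The key analytic input is the kernel expansion $B^\om_\z(w)=\sum_{n=0}^\infty (\overline{\z}w)^n/(2\om_{2n+1})$, which comes from the fact that $\{z^n/\sqrt{2\om_{2n+1}}\}_{n\ge0}$ is an orthonormal basis of $A^2_\om$, together with the moment lower bound $\om_{2n+1}\ge \ep^{n+1/2}\omg(\sqrt{\ep})$ (and analogously for $\mu$) that is already quoted in the introduction. Applying the definition \eqref{Dmu} of $D^\mu$ termwise in the $z$ variable gives
\begin{equation*}
D^\mu(B^\om_\z)(z)=\sum_{n=0}^\infty \frac{(\overline{\z}z)^n}{2\om_{2n+1}\mu_{2n+1}},
\end{equation*}
and for each fixed $z\in\D$ one may choose $\ep\in(\sqrt{|z|},1)$ so that $|z|^n/(\om_{2n+1}\mu_{2n+1})$ is dominated by a constant times $(|z|/\ep^2)^n$, ensuring that the series converges uniformly for $\z\in\overline{\D}$; in particular $\z\mapsto D^\mu(B^\om_\z)(z)$ is bounded on $\D$.

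To establish the first identity, I would take $f\in A^1_\om$, multiply the above kernel series by $f(\z)\om(\z)$, and use the uniform-in-$\z$ bound just noted together with $f\in L^1_\om$ to invoke Fubini, obtaining
\begin{equation*}
\int_\D f(\z) D^\mu(B^\om_\z)(z)\om(\z)\,dA(\z)
=\sum_{n=0}^\infty \frac{z^n}{2\om_{2n+1}\mu_{2n+1}}\int_\D f(\z)\,\overline{\z}^n\om(\z)\,dA(\z).
\end{equation*}
Writing the inner integral in polar coordinates and using the mean-value identity $\frac{1}{2\pi}\int_0^{2\pi}f(se^{i\t})e^{-in\t}\,d\t=\widehat{f}(n)s^n$ (valid for any $f\in\H(\D)$) yields $\int_\D f(\z)\overline{\z}^n\om(\z)\,dA(\z)=2\om_{2n+1}\widehat{f}(n)$, and the right-hand side collapses to $\sum_n \widehat{f}(n)z^n/\mu_{2n+1}=D^\mu(f)(z)$.

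For the second identity I would first note that since $\om(\z)\,dA(\z)$ is a finite measure, H\"older's inequality yields $L^p_\om\subset L^1_\om$ for $1<p<\infty$, so $P_\om(f)$ is well defined and an entirely analogous Fubini argument applied to $P_\om(f)(z)=\int_\D f(\z)\overline{B^\om_z(\z)}\om(\z)\,dA(\z)$ identifies its $n$-th Taylor coefficient as $\widehat{P_\om(f)}(n)=\frac{1}{2\om_{2n+1}}\int_\D f(\z)\overline{\z}^n\om(\z)\,dA(\z)$. Substituting these coefficients into \eqref{Dmu} and reversing Fubini (using once more the uniform bound on the kernel series) produces the stated representation of $D^\mu P_\om(f)$. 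The only real technical obstacle is the justification of these interchanges of summation and integration, and it is handled in every instance by the moment estimate, which for fixed $z\in\D$ forces geometric decay of the Taylor coefficients of $D^\mu(B^\om_\z)(z)$ in $n$ and hence absolute, uniform convergence of the kernel series for $\z\in\overline{\D}$.
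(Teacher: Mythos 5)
Your proof is correct. For the first identity your argument is essentially the paper's: expand $B^\om_\z$ in its Taylor series, justify the interchange of sum and integral by the moment lower bound $\om_{2n+1}\ge\ep^{n+1/2}\omg(\sqrt\ep)$, and use orthogonality of $e^{in\t}$ to collapse the sum to $D^\mu(f)(z)$. For the second identity, however, you take a genuinely different and more direct route. The paper first invokes \cite[Theorem~7]{PR19} to get $P_\om(f)\in A^p_\om\subset A^1_\om$ (this is where the hypothesis $\om\in\DD$ enters), applies the first identity to $P_\om(f)$, and then unravels the resulting double integral via a dilation $\r\to1^-$, Fubini, and the reproducing formula $\int_\D B^\om_u(\r\z)D^\mu(B^\om_\z)(z)\om(\z)\,dA(\z)=D^\mu(B^\om_u)(\r z)$, finishing with a uniform-convergence passage to the limit. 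You instead compute the Taylor coefficients of $P_\om(f)$ directly, $\widehat{P_\om(f)}(n)=\frac{1}{2\om_{2n+1}}\int_\D f(\z)\overline{\z}^n\om(\z)\,dA(\z)$, feed them into the definition of $D^\mu$, and match the result term by term with the kernel expansion of the right-hand side. This buys you something: it avoids the external citation and the limiting argument entirely, and it shows that the second identity is really a statement about $f\in L^1_\om$ (into which $L^p_\om$ embeds since $\om\,dA$ is finite), with the hypothesis $\om\in\DD$ playing no role in the proof. The only point worth making explicit is that your coefficient bound $|\widehat{P_\om(f)}(n)|\lesssim\|f\|_{L^1_\om}/\om_{2n+1}$ together with the moment estimate guarantees that $P_\om(f)\in\H(\D)$, so that $D^\mu P_\om(f)$ is well defined before you compare the two sides; you have all the ingredients for this, so it is a remark rather than a gap.
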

\begin{proof}
Fix $z\in\D$ and observe that
\begin{equation*}
	\begin{split}
		\int_{\D} f(\z)D^{\mu}(B^{\om}_{\z})(z)\om (\z)dA(\z) &= \int_{0}^1 2r\om(r)\frac{1}{2\pi} \int_0^{2\pi} \left (\sum\limits_{n=0}^{\infty} \widehat{f}(n)r^n e^{in\t}\right ) \left ( \sum\limits_{k=0}^{\infty} \frac{z^k r^k e^{-ik\t}}{2\mu_{2n+1}\om_{2n+1}} \right ) d\t dr \\
		&= \int_0^1 \om(r)\sum\limits_{n=0}^{\infty} \frac{\widehat{f}(n)z^n}{\mu_{2n+1}\om_{2n+1}}r^{2n+1} dr \\
		&= \sum\limits_{n=0}^\infty \frac{\widehat{f}(n)}{\mu_{2n+1}}z^n = D^\mu (f)(z), \quad f \in A^1_\omega.
	\end{split}
\end{equation*}
 Therefore if $f\in A^1_{\om}$, \eqref{representation} holds. 
Next, if $f \in L^p_{\om}$ then $P_{\om}(f)\in A^p_{\om}$ by \cite[Theorem~7]{PR19} . Therefore, two applications of \eqref{representation} and Fubini's Theorem yield
\begin{align*}
D^{\mu}P_{\om}(f)(z)&=\int_{\D} P_{\om}(f)(\z)D^{\mu}(B^{\om}_{\z})(z)\om (\z)dA(\z)
\\&=\lim\limits_{\r\to 1} \int_{\D} P_{\om}(f)(\r\z)D^{\mu}(B^{\om}_{\z})(z)\om (\z)dA(\z)
\\& =\lim\limits_{\r\to 1} \int_{\D} \left(\int_{\D} f(u)B^{\om}_u(\r\z)\om(u)dA(u)\right)D^{\mu}(B^{\om}_{\z})(z)\om (\z)dA(\z)
\\&= \lim\limits_{\r\to 1} \int_{\D} f(u)\om(u) \left(\int_{\D} B^{\om}_u(\r\z) D^{\mu}(B^{\om}_{\z})(z) \om (\z)dA(\z) \right) dA(u)
\\&= \lim\limits_{\r\to 1} \int_{\D} f(u)D^{\mu} (B^{\om}_u)(\r z)\om(u)dA(u),
\end{align*}
Finally, bearing in mind that $D^{\mu} (B^{\om}_u)(\r z)$ converges uniformly in $u\in\D$ to $D^{\mu} (B^{\om}_u)(z)$ as $\r\to 1^-$, it follows that
\begin{equation*}
\label{limite}
\lim\limits_{\r\to 1} \int_{\D} f(u)D^{\mu} (B^{\om}_u)(\r z)\om(u)dA(u)=\int_{\D} f(u)D^{\mu} (B^{\om}_u)(z)\om(u)dA(u).
\end{equation*}
This finishes the proof.

\end{proof}

Now, we provide some useful descriptions of $\B_0^\mu$ which can be proved by standard techniques, so we omit its proof.
\begin{proposition}\label{caract B0mu}
 Let $\mu$ be a radial weight. Then,
\begin{itemize}
\item[\rm(i)] 
$\B^\mu_0$ is a closed subspace of $\B^\mu$.
\item[\rm(ii)]   Let $f\in \H(\D)$, then $f\in \B^\mu_0$ if and only if 
    $$
    \lim\limits_{r\to 1^-}\nm{f-f_r}_{\B^\mu} = 0,
    $$
    where $f_r(z)=f(rz)$, $0\le r<1$.
\item[\rm(iii)] $\B^\mu_0$ is the closure  in $\B^\mu$ of the set of polynomials.

\end{itemize}

\end{proposition}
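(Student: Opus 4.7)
The proof breaks into three standard pieces that all rely on the simple algebraic observation that $D^\mu$ commutes with dilation, namely $D^\mu(f_r)(z) = \sum_n \frac{\widehat{f}(n)r^n}{\mu_{2n+1}} z^n = (D^\mu f)(rz)$, together with the fact that $\widehat{\mu}$ is bounded on $\D$, decreasing in $|z|$, and satisfies $\widehat{\mu}(z)\to 0$ as $|z|\to 1^-$.

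For (i), I would take a sequence $\{f_n\}\subset\B_0^\mu$ converging to $f$ in $\B^\mu$-norm. Given $\varepsilon>0$, pick $n$ with $\|f-f_n\|_{\B^\mu}<\varepsilon/2$, so
$\widehat{\mu}(z)|D^\mu f(z)|\le \varepsilon/2+\widehat{\mu}(z)|D^\mu f_n(z)|$
for every $z\in\D$, and then choose $|z|$ close enough to $1$ so the second term is below $\varepsilon/2$. For (iii), note that if $p$ is a polynomial then $D^\mu p$ is a polynomial, hence bounded, so $\widehat{\mu}(z)|D^\mu p(z)|\to 0$ as $|z|\to 1^-$; thus polynomials sit inside $\B_0^\mu$, and (i) implies their $\B^\mu$-closure does too. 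For the converse inclusion, given $f\in\B_0^\mu$, part (ii) yields $\|f-f_r\|_{\B^\mu}\to 0$; for fixed $r$, the function $D^\mu(f_r)=(D^\mu f)_r$ is analytic on a neighborhood of $\overline{\D}$, so its Taylor polynomials $s_n(D^\mu f_r) = D^\mu(s_n f_r)$ converge uniformly on $\overline{\D}$, which together with the boundedness of $\widehat{\mu}$ gives $\|f_r - s_n f_r\|_{\B^\mu}\to 0$. A diagonal choice then produces a polynomial sequence converging to $f$ in $\B^\mu$.

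The only step needing real care is the non-trivial direction of (ii): assuming $f\in\B_0^\mu$, I want $\|f-f_r\|_{\B^\mu}\to 0$; the reverse implication is immediate since each $f_r$ has $D^\mu f_r$ analytic on $\{|z|<1/r\}$, hence bounded on $\overline{\D}$, while $\widehat{\mu}(z)\to 0$ at the boundary, so $f_r\in\B_0^\mu$ and (i) passes the limit. Writing $g=D^\mu f$ and fixing $\varepsilon>0$, I first choose $R<1$ with $\widehat{\mu}(z)|g(z)|<\varepsilon/4$ for $|z|\ge R$, and then set $R_1=(1+R)/2$. On the compact set $\{|z|\le R_1\}$ I use the boundedness of $\widehat{\mu}$ together with the uniform continuity of $g$ to make $\widehat{\mu}(z)|g(z)-g(rz)|<\varepsilon$ once $r$ is close enough to $1$. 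On $\{|z|>R_1\}$ I split
$\widehat{\mu}(z)|g(z)-g(rz)|\le \widehat{\mu}(z)|g(z)|+\widehat{\mu}(rz)|g(rz)|$
using that $\widehat{\mu}$ is radially decreasing and $|rz|\le|z|$; then, provided $r\ge R/R_1$, one has $|rz|\ge R$ on this region, so both terms are below $\varepsilon/4$. This is the one place where some genuine argument is required; the rest is purely formal.
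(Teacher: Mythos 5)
Your proof is correct, and it is exactly the ``standard techniques'' argument that the paper explicitly omits: the identity $D^\mu(f_r)=(D^\mu f)_r$, monotonicity and boundedness of $\widehat{\mu}$ together with $\widehat{\mu}(z)\to 0$ as $|z|\to1^-$, the $\varepsilon/2$ closedness argument, the compact/annulus splitting for (ii), and approximation of $f_r$ by its Taylor polynomials followed by a diagonal selection for (iii). Nothing is missing; in particular you correctly handle the only delicate point, namely bounding $\widehat{\mu}(z)|g(rz)|$ by $\widehat{\mu}(rz)|g(rz)|$ on the outer annulus.
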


\section{Main results.}\label{s3}

\begin{Prf}{\em{Theorem~\ref{th: contencion mu radial}.}}
In order to prove the first part of our statement we are going to see that 
    \begin{equation*}\label{eq:j10}
    \nm{f}_\B \lesssim \nm{zV_\mu(f)}_{L^\infty}, \quad f\in \H(\D),
    \end{equation*}
where  $V_\mu(f)(z) = \frac{\mug(z)}{\abs{z}}D^\mu(f)(z)$,  $z\in \D$. 
    Precisely, we will prove that the functional
    $$
    L_f(g) = \esc{g,f}_{A^2} = \lim_{r\to 1^-}\esc{g,f_r}_{L^2},
    $$
    belongs to $(A^1)^\ast$ and $\nm{L_f} \lesssim \nm{zV_\mu(f)}_{L^\infty}$, this fact together with \cite[Theorem~5.3]{Zhu} will finish the proof.
    
 Let $h\in A^1$, then by \eqref{representation}
    \begin{equation}
    \begin{split}\label{eq:j11}
    \esc{h, V_\mu(f_r)}_{L^2} &= \int_\D h(z)\frac{\mug(z)}{\abs{z}}\int_\D \overline{f_r(\z)}D^\mu(B_z)(\z) \,dA(\zeta)\,dA(z) \\
    &= \int_\D \overline{f_r(\z)}\left ( \int_\D h(z) D^\mu(B_z)(\zeta)\frac{\mug(z)}{\abs{z}}dA(z)\right ) dA(\zeta) 
\\ & = \esc{T(h),f_r}_{L^2}, \quad 0\le r<1,
    \end{split}
    \end{equation}
    where
    $$
    T(h)(z) = \int_\D h(\z)D^\mu(B_\z)(z)\frac{\mug(\z)}{\abs{\z}}dA(\z), \quad z\in \D.
    $$
Observe that   
 \begin{equation*}
        \begin{split}
            T(h)(z) &= \int_\D h(\z)D^\mu(B_\z)(z)\frac{\mug(\z)}{\abs{\z}}dA(\z) \\ 
            &= \int_0^1 2\mug(r)\frac{1}{2\pi}\int_0^{2\pi}\left ( \sum\limits_{n=0}^\infty \hat{h}(n)r^ne^{in\t}\right )\left ( \sum \limits_{k=0}^\infty \frac{z^kr^ke^{-ik\t}}{\mu_{2k+1}}(k+1) \right ) d\t dr \\
            &= \int_0^1 2\mug(r)\sum\limits_{n=0}^\infty\frac{n+1}{\mu_{2n+1}}\hat{h}(n)z^nr^{2n} dr \\
            &= \sum\limits_{n=0}^\infty \frac{2(n+1)\mug_{2n}}{\mu_{2n+1}}\hat{h}(n)z^n = \sum\limits_{n=0}^\infty \frac{2(n+1)}{2n+1}\hat{h}(n)z^n, \quad z\in \D.
        \end{split}
    \end{equation*}
So if $\tilde{g}(z) = \sum\limits_{n=0}^\infty \frac{2n+1}{2(n+1)}\hat{g}(n)z^n \in A^1$, 
 $g(z)=T (\tilde{g})(z)$ and by \eqref{eq:j11}
\begin{equation}\label{igualdad producto}
\esc{g,f_r}_{L^2} = \esc{\tilde{g},V_\mu(f_r)}_{L^2}, \quad 0\le r<1.
\end{equation}
Let us prove 
\begin{equation*}\label{eq:j12}
\nm{\tilde{g}}_{A^1} \lesssim \nm{g}_{A^1}, \quad g\in \H(\D).
\end{equation*}
Notice that $
\tilde{g}(z) = \sum\limits_{n=0}^\infty \frac{2n+1}{2(n+1)}\hat{g}(n)z^n = g(z) - \sum\limits_{n=0}^\infty \frac{1}{2(n+1)}\hat{g}(n)z^n = g(z)-h(z)$ 
and $2zh(z) = \sum\limits_{n=0}^{\infty} \frac{\hat{g}(n)}{n+1}z^{n+1}$ is the primitive of $g$ with value $0$ at the origin.  So, by the Littlewood-Paley equivalence  for $A^1$ \cite[Theorem 4.28]{Zhu} 
\begin{equation*}\label{norma g tilde}
        \nm{h}_{A^1} = \int_\D \abs{h(z)} dA(z) \asymp \int_\D \abs{2zh(z)}dA(z) \asymp \int_\D \abs{g(z)}(1-\abs{z})dA(z) \leq \nm{g}_{A^1},
\end{equation*}
and therefore $\nm{\tilde{g}}_{A^1} \leq \nm{g}_{A^1} + \nm{h}_{A^1}\lesssim  \nm{g}_{A^1} $. This fact together with
\eqref{igualdad producto} yields
\begin{equation}\label{funci desig}
    \begin{split}
        \abs{\esc{g,f_r}_{L^2}} &= \abs{\esc{\tilde{g},V_{\mu}(f_r)}_{L^2}} \leq \int_\D \abs{\tilde{g}(z)} \abs{V_\mu(f_r)(z)} dA(z) \\
        &= \int_\D \frac{\abs{\tilde{g}(z)}}{\abs{z}} \abs{zV_\mu(f_r)(z)} dA(z) \lesssim \nm{\tilde{g}}_{A^1} \nm{zV_\mu(f_r)}_{L^\infty} \lesssim \nm{g}_{A^1} \nm{zV_\mu(f_r)}_{L^\infty}.
    \end{split}
\end{equation}
Now, let us see that for every $0<r<1$
\begin{equation}\label{desigualdad norma V}
\nm{zV_\mu(f_r)}_{L^\infty} = \sup\limits_{z\in \D} \mug(z)\abs{D^\mu(f_r)(z)} \leq 
\| f\|_{\mathcal{B}^\mu}.
\end{equation}
In order to prove this last inequality, notice that $\mug(z)D^\mu(f_r)(z)$ is a continuous function in $\overline{\D}$ for every $0<r<1$, so there exists $z_r\in \overline{\D}$ where the supremum is reached. In addition,  $z_r\in \D$ because $\mug=0$ on $\T$. So,
\begin{equation*}
    \begin{split}
        \sup\limits_{z\in \D} \mug(z)\abs{D^\mu(f_r)(z)} &= \mug(z_r)\abs{D^\mu(f_r)(z_r)} 
\\ & \leq 
 \mug(z_r)\sup\limits_{\abs{u} = \abs{z_r}}\abs{D^\mu(f)(u)} = \sup\limits_{\abs{u} = \abs{z_r}}\mug(u)\abs{D^\mu(f)(u)} \leq \| f\|_{\mathcal{B}^\mu}.
    \end{split}
\end{equation*}
Therefore, joining \eqref{funci desig} and \eqref{desigualdad norma V},  $L_f\in (A^1)^\ast$ and $\nm{L_f} \lesssim \| f\|_{\mathcal{B}^\mu}$.

Now, let us prove that $\B_0^\mu\subset\B_0$.  
By Proposition~\ref{caract B0mu}
    $
    \lim\limits_{r\to 1^-}\nm{f-f_r}_{\B^\mu} = 0
    $ for each $f  \in\B_0^\mu$,
    which together with \eqref{eq: contencion mu radial}
implies
    $$
    \lim \limits_{r\to 1^-}\nm{f-f_r}_\B \lesssim \lim\limits_{r\to 1^-}\nm{f-f_r}_{\B^\mu} = 0, \quad\text{ for each $f  \in\B_0^\mu$}.
    $$
This finishes the proof.
\end{Prf}

\begin{Prf}{\em{Theorem~\ref{th: reverse}.}}

(i)$\Rightarrow$(ii). Let $f\in \B$. Since $\B\subset A^1$, \eqref{representation} yields 
$$ D^{\mu} (f)(z)=\int_{\D} f(\z) D^{\mu }(B_{\z})(z)dA(\z).$$
Moreover, since the  Bergman projection $P
: L^{\infty}\to \B$ is bounded and onto there exists $h \in L^{\infty}$ such that $P(h)=f$ and $\|f\|_{\B}\asymp \|h \|_{L^{\infty}}$. Then,
\begin{align*}
D^{\mu} (f)(z)&=\int_{\D} \left(\int_{\D} h(u)B_{u}(\z)dA(u)\right)D^{\mu }(B_{\z})(z)dA(\z)
\\ &= \int_{\D} h(u) \left(\int_{\D}D^{\mu }(B_{\z})(z) B_{u}(\z)dA(\z)\right)dA(u)
\\&= \int_{\D} h(u)\overline{D^{\mu}(B_{z})(u)} dA(u).
\end{align*}
Hence, by using Proposition \ref{medias orden 1} and Lemma \ref{caract. D check}(iii)
\begin{align*}
|D^{\mu}(f)(z)| &\leq \|h\|_{L^{\infty}}\int_{\D}|D^{\mu}(B_{z})(u)| dA(u) \asymp  \|f\|_{\B} \int_0^1  \left(1+\int_0^{s|z|}\frac{1}{\mug(t)(1-t)^2} dt \right) ds 
\\&\leq  
\| f\|_{\B}\left(
1+ \int_0^1 \frac{|z|}{\mug(s|z|)(1-s|z|)} ds \right)\lesssim \frac{\| f\|_{\B}}{\mug(z)}, \quad z\in \D. 
\end{align*}
Therefore, (ii) holds.

(ii)$\Rightarrow$(i).  Firstly, let us prove that $\mu \in \DD$. Testing \eqref{Desig Bmu radial} on monomials we obtain 
 a constant $C=C(\mu)>0$ such that
    \begin{equation*}\label{testeo monomios}
        \mug(r)r^n \leq C\mu_{2n+1}, \quad n\in \N\cup\{0\}, \quad 0\le r<1.
\end{equation*}
Therefore
    \begin{equation*}\label{desig momentos}
    \begin{split}
        \mu_{\frac{3}{2}n+1} 
& = \left ( \frac{3}{2}n + 1\right )\int_0^1r^{\frac{3}{2}n}\mug(r) dr 
        \le  C \left ( \frac{3}{2}n + 1\right )\mu_{2n+1}\int_0^1r^{\frac{1}{2}n}dr 
\le 3C \mu_{2n+1}, \quad n\in \N\cup\{0\}.
    \end{split}
    \end{equation*}
    So, if $x\ge 1$ and $n\in \N $ are such that $n\leq x < n+1$, 
    \begin{equation*}
        \mu_{\frac{3}{2}x} \leq C \mu_{\frac{3}{2}x+1}\leq C\mu_{\frac{3}{2}n+1}\leq C\mu_{2n+1} \leq C\mu_{2x-2} \leq C \mu_{2x}.
    \end{equation*}
where $C>0$ is a constant that changes in every inequality only depending on $\mu$.
    Naming $y=\frac{3}{2}x$, 
    $$
    \mu_{y} \leq C \mu_{\frac{4}{3}y} \leq C^2 \mu_{\frac{16}{9}y} \leq C^3 \mu_{\frac{64}{27}y}\leq C^3\mu_{2y},\quad y\ge\frac{3}{2}.
    $$
    Consequently, $\mu\in \DD$  by Lemma~\ref{caract. pesos doblantes} (iv) .

   Now, let us prove  that $\mu\in \M$. By \cite[Theorem 2]{PR19} it is sufficient to prove that for every $f\in \B$, there exists $g\in L^\infty$ such that $P_\mu(g)=f$ and $\nm{g}_{L^\infty} \lesssim \nm{f}_\B$.
    Let $f(z)=\sum \limits_{n=0}^\infty \hat{f}(n)z^n\in \B$ and $h(z)=\sum \limits_{n=0}^\infty \hat{h}(n)z^n  \in \B^\mu$.
 Then, 
    \begin{equation*}
        \begin{split}
            P_\mu(\mug \cdot D_{\mu}(h))(z) &= \int_\D \mug(\z)D_\mu(h)(\zeta)\overline{B^\mu_z(\z)}\mu(\z) dA(\z) \\
            &= \int_\D \left ( \sum \limits_{n=0}^\infty \frac{\hat{h}(n)\z^n}{\mu_{2n+1}}\right )\left (\sum\limits_{k=0}^\infty \frac{\overline{\z}^kz^k}{2\mu_{2k+1}} \right )\mu(\z)\mug(\z)dA(\z) \\
            &= \int_0^1 r\mu(r)\mug(r)\frac{1}{2\pi} \int_0^{2\pi} \left ( \sum \limits_{n=0}^\infty \frac{\hat{h}(n)r^n e^{in\t}}{\mu_{2n+1}}\right )\left (\sum\limits_{k=0}^\infty \frac{r^ke^{-ik\t}z^k}{\mu_{2k+1}} \right ) d\t dr \\
            &= \int_0^1\mu(r)\mug(r)\sum\limits_{n=0}^\infty \frac{\hat{h}(n)r^{2n+1}}{\mu_{2n+1}^2}z^n dr \\
            &= \sum\limits_{n=0}^\infty \frac{(\mu \mug)_{2n+1}}{\mu_{2n+1}^2}\hat{h}(n) z^n,\quad z\in\D.
        \end{split}
    \end{equation*}
    So, if we take $g(z) = \mug(z)D^\mu(h)(z)$ with $h$ defined by
    $
    \hat{h}(n) = \frac{\mu_{2n+1}^2}{(\mu \mug)_{2n+1}}\hat{f}(n),
    $
all that we need to prove is that $\la_n=\frac{(\mu_{2n+1})^2}{(\mu \mug)_{2n+1}}$,
$n\in\N\cup\{0\}$, is a coefficient multiplier of $\B$. 
Then, 
   by Theorem~\ref{th: contencion mu radial}
$$   \nm{g}_{L^\infty}=\|h\|_{\B^\mu}\lesssim \nm{h} _{\B}\lesssim  \nm{f}_{\B}
    $$
and $P_\mu(g) = f$.
    In order to prove that $\set{\la_n}_{n=0}^\infty$ is a coefficient multiplier of $\B$ it is enough to see that
    \begin{equation}\label{condicion multipli}
        \sup\limits_{0<r<1}(1-r)M_1(r,\la^{[1]}) <\infty.
    \end{equation}
This follows from the equivalence
$\|f\|_{\B}\asymp \sup_{z\in \D}(1-|z|)^\beta||f^{[\beta]}(z)|$  \cite{ChoeRinHung}
for the multiplier transformation 
$f^{[\beta]}(z)=\sum_{n=0}^\infty (n+1)^\beta\widehat{f}(n)z^n,  \, \beta>0$,  and the inequalities
  \begin{equation*}
    \begin{split}
    |(\lambda\ast f)^{[2]}(r^2e^{it})|(1-r)^{2}
    &=\left|\frac{1}{2\pi}\int_{-\pi}^\pi \lambda^{[1]}(re^{i(t+\theta)}) f^{[1]}(re^{-i\t})\,d\t\right|(1-r)^{2}\\
    &\le M_\infty(r, f^{[1]})M_1(r,\lambda^{[1]})(1-r)^{2}
    \lesssim\|f\|_\B M_1(r,\lambda^{[1]})(1-r),
    \end{split}
    \end{equation*}
    where $\la(z)=\sum \limits_{n=0}^\infty \la_n z^n$.

By \eqref{propervn}, 
    \begin{equation}\label{media integral Vn}
    M_1(r,\lambda^{[1]}) = \nm{(\la^{[1]})_r}_{H^1} \leq C(\mu) + \sum\limits_{n=2}^\infty \nm{V_n\ast(\la^{[1]})_r}_{H^1}, \quad 0<r<1.
    \end{equation}
    For each $n\in \N \setminus \set{1}$ and $r\in \left [\frac{1}{2},1 \right )$ consider
    $$
    F_n(x) = \frac{\mu_{2x+1}^2}{(\mu\mug)_{2x+1}}r^x \c_{[2^{n-1},2^{n+1}]}(x), \quad x\in \R, 
    $$
    and $G_n(x)=(x+1)F_n(x)$.
On the other hand, by Lemma~\ref{caract. pesos doblantes}(iii)
    \begin{equation}\label{equiv momentos}
        (\mu\mug)_{2x+1} =\frac{1}{2}\mug \left (1-\frac{1}{2x+1}\right )^2 \asymp \mu_{2x+1}^2,  \quad x\ge 0.
    \end{equation}
   In addition,  for each radial weight $\nu$ there exists a constant $C=C(\nu)>0$ such that
    $$
    \int_0^1 s^x\left (\log{\frac{1}{s}} \right )^n \nu(s) ds \leq C \nu_x, \quad n\in \set{1,2},\quad  x\geq 2.
    $$
    So,  a direct calculation implies that there exists a constant $C=C(\mu)>0$ such that
    \begin{equation}\label{eq:j20}
    \abs{G_n''(x)} \leq C\abs{G_n(x)}, \quad n\in \N \setminus \set{1}, \quad r\in \left [\frac{1}{2},1 \right ), \quad x\geq 2.
    \end{equation}

    Therefore, \eqref{eq:j20} and \eqref{equiv momentos} yield
    \begin{equation*}
    \begin{split}
        A_{F_n,2} &= \max\limits_{x\in [2^{n-1},2^{n+1}]} \abs{G_n(x)} + \max\limits_{x\in [2^{n-1},2^{n+1}]} \abs{G_n''(x)} \lesssim  \max\limits_{x\in [2^{n-1},2^{n+1}]} \abs{G_n(x)} \\
        &\lesssim \max\limits_{x\in [2^{n-1},2^{n+1}]}(x+1)r^x \lesssim 2^nr^{2^{n-1}},\quad n\in \N \setminus \set{1}.
        \end{split}
    \end{equation*}
    For each $n\in \N \setminus \set{1}$, choose a $C^\infty-$function $\F_n$ with compact support contained in $[2^{n-2},2^{n+2}]$ such that $\F_n = G_n$ on $[2^{n-1},2^{n+1}]$ and
    \begin{equation}\label{Afi}
        A_{\F_n,2} = \max\limits_{x\in \R}\abs{\F_n(x)} + \max\limits_{x\in \R}\abs{\F_n''(x)}\lesssim 2^nr^{2^{n-1}}, \quad n\in \N \setminus \set{1}.
    \end{equation}
    Since
    $$
    W_1^{\F_n}(z) = \sum\limits_{k\in \Z}\F_n(k)z^k = \sum\limits_{k\in \Z\cap [2^{n-2},2^{n+2}]} \F_n(k)z^k,
    $$
    \eqref{vn} yields
    \begin{equation*}
    \begin{split}
        V_n\ast(\lambda^{[1]})_r(z)  &=\sum\limits_{k=2^{n-1}}^{2^{n+1}-1}\f\left ( \frac{k}{2^{n-1}} \right )(k+1)\frac{\mu_{2k+1}^2}{(\mu\mug)_{2k+1}}r^kz^k = \sum\limits_{k=2^{n-1}}^{2^{n+1}-1}\f\left ( \frac{k}{2^{n-1}} \right )\F_n(k)z^k \\
        &= \left ( W_1^{\F_n}\ast V_n \right )(z), \quad n\in \N \setminus \set{1}.
    \end{split}
    \end{equation*}
    This together with Theorem~\ref{Th Polinomios Cesaro}, \eqref{Afi} and \eqref{propervn},
 implies
    \begin{equation*}
    \begin{split}
        \nm{V_n\ast(\lambda^{[1]})_r}_{H^1}  &= \nm{W_1^{\F_n}\ast V_n}_{H^1} \lesssim A_{\F_n,2}\nm{V_n}_{H^1} \lesssim 2^nr^{2^{n-1}}\nm{V_n}_{H^1} \\
        &\lesssim2^nr^{2^{n-1}}, \quad r\in \left [ \frac{1}{2},1 \right ), \quad n\in \N \setminus \set{1}.
    \end{split}
    \end{equation*}
    which combined with \eqref{media integral Vn} gives
    $$
    M_1(r,\la^{[1]}) \lesssim \sum\limits_{n=2}^\infty 2^nr^{2^{n-1}} \lesssim \frac{1}{1-r}, \quad r\in \left [ \frac{1}{2},1\right ).
    $$
    Therefore \eqref{condicion multipli} holds, and thus $\mu \in \M$. 

The equivalence between (ii) and (iii) follows from Theorem~\ref{th: contencion mu radial}.
Therefore, we have already proved (i)$\Leftrightarrow$(ii)$\Leftrightarrow$(iii).

Next, let us prove (i)$\Rightarrow$(iv). Take $f\in \B_0$, then 
$\lim_{r\to 1^-}\| f_r-f\|_{\B_0}=0$, which together with (ii) implies 
that $f\in \B^\mu$ and  $\lim_{r\to 1^-}\| f_r-f\|_{\B^\mu_0}=0$. So, $f\in \B^\mu_0$  by Proposition~\ref{caract B0mu}.

Now let us prove (iv)$\Rightarrow$(ii). If (iv) holds, then
$$ \| f_r\|_{\B^\mu}\lesssim    \| f_r\|_{\B}\le \|f\|_{\B}, \quad f\in \B, \quad 0\le r<1.$$
Moreover, for any $z\in\D$
$$\widehat{\mu}(z)|D^\mu(f)(z)|=\lim_{r\to 1^-} \widehat{\mu}(rz)|D^\mu(f)(rz)|
\le \liminf_{r\to 1^-}\| f_r\|_{\B^\mu},$$
so 
$$\| f\|_{\B^\mu}\le  \liminf_{r\to 1^-}\| f_r\|_{\B^\mu}\lesssim \|f\|_{\B}, \quad f\in \B.$$
That is (ii) holds.

The equivalence between (iv) and (v) follows from Theorem~\ref{th: contencion mu radial}.
 This finishes the proof.

\end{Prf}

\section{Relationship between $\B^\mu$ and $H^\infty$.}\label{s4}

   For each $x\in\mathbb{R}$, $E(x)$ denotes the integer such that $E(x)\le x<E(x)+1$.
The following technical lemma will be used in the proof of Proposition~\ref{th:hinftynobmu}.
It can be deduced from the proof of \cite[Lemma 8]{PelRathg} but we prove it for the convenience of the reader.
\begin{lemma}\label{serie lagunar}
    Let $\om\in \DD$ such that $\omg(0)=1$. 
For every $n\in \N\cup\set{0}$ denote by $r_n$ the smallest $r_n\in [0,1)$ such that
    $
    \omg(r_n) = \frac{1}{2^n},
    $
    and
    $
    M_n=E\left ( \frac{1}{1-r_n} \right ).
    $
Then, 
    $$
  1+  \sum\limits_{n=0}^\infty 2^n r^{M_n} \asymp \frac{1}{\omg(r)},\quad 0\le r<1.
    $$
\end{lemma}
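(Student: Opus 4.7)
The plan is to compare both sides pointwise using the level sets determined by the sequence $\{r_n\}$. Given $r\in[0,1)$, let $n=n(r)\ge 0$ be the unique integer with $r_n\le r<r_{n+1}$. Since $\omg$ is continuous and decreasing, this implies $2^{-(n+1)}<\omg(r)\le 2^{-n}$, so $1/\omg(r)\asymp 2^n$, and it suffices to show $1+\sum_{k=0}^\infty 2^k r^{M_k}\asymp 2^n$ with constants depending only on $\omega$.

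For the upper bound I would split the series at $k=n$. The head is trivial: $\sum_{k=0}^n 2^k r^{M_k}\le\sum_{k=0}^n 2^k\le 2^{n+1}$. For the tail $k>n$, apply Lemma~\ref{caract. pesos doblantes}(ii) with $s=r$ and $t=r_k$ (note $r<r_{n+1}\le r_k$), obtaining $\omg(r)/\omg(r_k)\le C\bigl((1-r)/(1-r_k)\bigr)^\alpha$, which rearranges to
$$1-r_k\le C^{1/\alpha}\bigl(\omg(r_k)/\omg(r)\bigr)^{1/\alpha}(1-r)\le (2C)^{1/\alpha}\,2^{(n-k)/\alpha}(1-r).$$
Combining this with $M_k\ge 1/(1-r_k)-1$ and the elementary bound $r^{M_k}\le e^{-M_k(1-r)}$ (valid since $-\log r\ge 1-r$) yields $r^{M_k}\lesssim \exp\bigl(-c\,2^{(k-n)/\alpha}\bigr)$ for some $c=c(\omega)>0$. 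Hence $\sum_{k>n} 2^k r^{M_k}\lesssim 2^n\sum_{j\ge 1}2^j\exp(-c\,2^{j/\alpha})$, and the inner sum is a convergent constant depending only on $\omega$.

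For the lower bound, the single term $2^n r^{M_n}$ already suffices. When $r_n\ge 1/2$, the estimate $|\log r_n|\le 2(1-r_n)$ together with $M_n(1-r_n)\le 1$ gives $r^{M_n}\ge r_n^{M_n}\ge e^{-2}$, so $1+\sum_k 2^k r^{M_k}\ge 2^n r^{M_n}\gtrsim 2^n\asymp 1/\omg(r)$. When $r_n<1/2$, one has $2^{-n}=\omg(r_n)>\omg(1/2)$, so $n$ takes only finitely many values; for these indices $1/\omg(r)\le 2^{n+1}$ is uniformly bounded, and the trivial estimate $1+\sum_k 2^k r^{M_k}\ge 1$ closes the case after absorbing the finite constant.

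The main obstacle is the tail estimate, where one needs super-geometric decay in $k-n$ from the modest doubling hypothesis $\omega\in\DD$; this is exactly what Lemma~\ref{caract. pesos doblantes}(ii) provides once $\alpha\ge\alpha_0$ is fixed large enough that the exponential bound above converges. The remaining ingredients are elementary analytic inequalities on $\log r$ and $(1-r)^{-1}$.
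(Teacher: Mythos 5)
Your proof is correct and follows essentially the same route as the paper's: locate $r$ between consecutive $r_n$, split the series at that index, bound the head geometrically, and use Lemma~\ref{caract. pesos doblantes}(ii) to extract the super-geometric decay $r^{M_k}\lesssim e^{-c2^{(k-n)/\alpha}}$ in the tail, with the single term $2^nr^{M_n}$ giving the lower bound. Your write-up is in fact a bit more careful than the paper's (the case $r_n<1/2$ in the lower bound, the use of $M_k\ge 1/(1-r_k)-1$, and treating general $r$ directly instead of first reducing to $r=r_N$), and the only cosmetic remark is that $\sum_j 2^j e^{-c2^{j/\alpha}}$ converges for every fixed $\alpha>0$, so no largeness of $\alpha$ beyond $\alpha\ge\alpha_0$ is needed.
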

\begin{proof}
We may assume that $r_1\le r <1$. Then,
there exists $N\in \N$ such that $r_N \leq r < r_{N+1}$, and 
    $$
    \sum\limits_{n=0}^\infty 2^nr^{M_n} \geq \sum\limits_{n=0}^\infty 2^nr_N^{M_n} \geq 2^Nr_N^{M_N} \asymp 2^N \asymp 2^{N+1} = \frac{1}{\omg(r_{N+1})} \geq \frac{1}{\omg(r)}.
    $$
In order to prove the reverse inequality. Firstly, we deal with $r=r_N$ and we split the series in two terms:
    $$
    \sum\limits_{n=0}^N 2^n r_N^{M_n} \leq r_N^{M_N}\sum\limits_{n=0}^N2^n \asymp 2^{N+1}\asymp\frac{1}{\omg(r_N)}.
    $$
    For the remaining  term,  by Lemma~\ref{caract. pesos doblantes} (ii) there exists $\alpha>0$ and $C>0$ such that
    $$
    \frac{1-r_n}{1-r_{n+j}} \geq C \left (\frac{\omg(r_n)}{\omg(r_{n+j})} \right )^\frac{1}{\alpha} = C2^{\frac{j}{\alpha}},\quad n,j\in \N\cup\set{0}.
    $$
    This, the inequality $$\log\frac{1}{x}\geq 1-x,\quad  0<x\leq 1,$$ and the definition of $r_n$ give
    \begin{equation*}
    \begin{split}
    \sum\limits_{n=N+1}^\infty 2^nr_N^{M_n} &= 2^N  \sum\limits_{j=1}^\infty 2^jr_N^{M_{N+j}} =  2^N  \sum\limits_{j=1}^\infty 2^j  e^{-M_{N+j}\log{\frac{1}{r_N}}}  \\
    &\leq 2^N\sum\limits_{j=1}^\infty 2^j  e^{-\frac{1-r_N}{1-r_{N+j}}} \leq 2^N\sum\limits_{j=1}^\infty2^je^{-C2^{\frac{j}{\alpha}}} \asymp 2^N = \frac{1}{\omg(r_N)}.
    \end{split}
    \end{equation*}
    For $r_1\le r<1$, take  $N\in \N$ with $r_N\leq r < r_{N+1}$, and then 
$$
    \sum\limits_{n=0}^\infty 2^nr^{M_n} \leq \sum\limits_{n=0}^\infty 2^n{r_{N+1}}^{M_n} \lesssim \frac{1}{\omg(r_{N+1})} = \frac{2}{\omg(r_{N})} \leq \frac{2}{\omg(r)}.
    $$
    This finishes the proof.
\end{proof}

\begin{Prf}{\em{Proposition~\ref{th:hinftynobmu}}.}
  If $H^\infty \not\subset \B^\mu$, then its obvious that $H^\infty \neq \B^\mu$. \\ 
   Next assume $H^\infty \subset\B^\mu$. Then, bearing in mind Theorem~\ref{th: contencion mu radial} we have that convergence in $\B^\mu$ implies uniform convergence on compact subsets of $\D$, and then standard arguments imply that $H^\infty$ is continuously embedded in $\B^\mu$, that is there exists a constant $C>0$ such that
    $$
    \nm{f}_{\B^\mu} \leq C \nm{f}_{H^\infty}, \quad f\in H^\infty.
    $$
    Then, by testing this inequality on monomials
and arguing as in the proof of (ii)$\Rightarrow$(i) of Theorem~\ref{th: reverse} it follows that $\mu\in\DD$.
  Next,  without loss of generality assume that $\widehat{\mu}(0)=1$
and consider the function 
    $$
    f(z) = \mu_1 +\sum_{n=0}^\infty\mu_{2M_n+1}2^nz^{M_n}\in \H(\D),
    $$
where $\{M_n\}$ are associated to $\mu$ via the statement of Lemma~\ref{serie lagunar}. 
Let us prove $f\in \B^\mu\setminus H^\infty$.
By Lemma~\ref{serie lagunar},
    \begin{equation*}
        \begin{split}
            \nm{f}_{\B^\mu} &= \sup\limits_{z\in \D} \mug(z)\abs{D^\mu(f)(z)}  
= \sup\limits_{z\in \D} \mug(z)\left |1+\sum\limits_{n=0}^\infty 2^nz^{M_n}\right | 
\\ & \lesssim \sup\limits_{0\leq r < 1}\mug(r) \left( 1+\sum\limits_{n=0}^\infty 2^n r^{M_n} \right)
< \infty,
        \end{split}
    \end{equation*}
that is $f\in \B^\mu$.
On the other hand, by Lemma~\ref{serie lagunar} and Lemma~\ref{caract. pesos doblantes}(ii)
    \begin{equation*}
        \begin{split}
         \mu_1+  \sum\limits_{n=0}^\infty \mu_{2M_n+1}2^n &= 
\int_0^1 t\mu(t)   
\left( 1+
\sum\limits_{n=0}^\infty 2^n t^{2M_n}\right) dt 
\\ & \asymp\int_{0}^1t\frac{\mu(t)}{\mug(t^2)}\,dt 
 \asymp\int_{0}^1t\frac{\mu(t)}{\mug(t)}\,dt 
= \infty,
        \end{split}
    \end{equation*}
which implies that $ f\notin H^\infty$ because it has positive Taylor coefficients. This finishes the proof.
\end{Prf}

\end{document}